\newtheorem{thm}{Theorem}[section]
\newtheorem{lema}[thm]{Lemma}
\theoremstyle{definition}
\newtheorem{defn}[thm]{Definition}
\theoremstyle{remark}
\newtheorem{rem}[thm]{Remark}
\numberwithin{equation}{section}
\newcommand{\R}{\mathbb R}
\newcommand{\N}{\mathbb N}
\newcommand{\lp}{L^{p}(\Omega)}
\newcommand{\wsp}{W^{s,p}(\Omega)}
\newcommand{\wup}{W^{1,p}(\Omega)}
\newcommand{\K}{\mathcal{K}}
\newcommand{\LL}{\mathcal{\mathscr{L}}}
\newcommand{\ve}{\varepsilon}
\newcommand{\lam}{\lambda}
\author[L. M. Del Pezzo and A. M. Salort]
{Leandro M. Del Pezzo and Ariel M. Salort}
\address{Leandro M. Del Pezzo and Ariel M. Salort
\hfill\break\indent
CONICET and Departamento  de Matem{\'a}tica, FCEyN,
Universidad de Buenos Aires,
\hfill\break\indent Pabellon I, Ciudad Universitaria (1428),
Buenos Aires, Argentina.}
\email{{\tt ldpezzo@dm.uba.ar,
asalort@dm.uba.ar
}}
\title{The first non-zero Neumann $p-$fractional eigenvalue}
\keywords{nonlinear Fractional Laplacian, Neumann eigenvalues, 
H\"older infinity Laplacian}
\begin{document}
\begin{abstract}
 In this work we study the asymptotic behavior 
of the first non-zero Neumann 
$p-$fractional eigenvalue $\lambda_1(s,p)$ as $s\to 1^-$ and as $p\to\infty.$ 
We show that there exists a constant $\K$ such that
$\K(1-s)\lambda_1(s,p)$ goes to the first non-zero Neumann 
eigenvalue of the $p-$Laplacian. While in the limit case 
$p\to \infty,$ we prove that $\lambda_1(1,s)^{1/p}$ goes
to an eigenvalue of the H\"older $\infty-$Laplacian. 
\end{abstract}

\maketitle
\section{Introduction}

In this paper we set out to study the following non-local Neumann eigenvalue 
problems in a smooth bounded domain $\Omega\subset\R^n$ ($n\geq 1$)
\begin{equation} \label{ecu1}
	\begin{cases}
	-	\LL_{s,p}u = \lam |u|^{p-2}u \quad \mbox{ in } \Omega, \\
		u\in W^{s,p}(\Omega),
	\end{cases}
\end{equation}
where $1<p<\infty$ and $0<s<1.$ Here  
$\lam$ stands for the eigenvalue 
and $\LL_{s,p}$ is the regional fractional $p-$Laplacian, that is
\[
	\LL_{s,p}u(x)\coloneqq 2\mbox{ p.v.}\int_\Omega 
	\frac{|u(y)-u(x)|^{p-2}(u(y)-u(x))}{|x-y|^{n+sp}} 
	\, dy,
\]
  where p.v. is a commonly used abbreviation for ``in the principal value sense". 

Observe that, in the case $p=2,$ 
$\LL_{s,2}$ is the linear operator defined in  \cite{GMZ}, that is
the regional fractional Laplacian. 

The first non-zero eigenvalue of \eqref{ecu1} can be
characterized as 
\begin{equation*}
            \lambda_1(s,p)\coloneqq
            \inf\left\{
            \dfrac{\displaystyle
            \int_\Omega \int_\Omega \frac{|u(x)-u(y)|^p}{|x-y|^{n+sp}} \, 
	  dx \,dy}{ \displaystyle\int_\Omega|u(x)|^p\, dx}\colon u\in 
	  \mathcal{X}_{s,p}
            \right\},
\end{equation*}
where $\mathcal{X}_{s,p}=\left\{v\in\wsp\colon
            v\neq0,\int_\Omega |v(x)|^{p-2}v(x)\, dx=0\right\}.$
Here $W^{s,p}(\Omega)$ denotes 
a fractional Sobolev space (see Section \ref{pre}).
\bigskip

Non-local eigenvalue problems were recently studied in several 
papers. In \cite{rossi1} it was analyzed the first Neumann eigenvalue of a 
non-local diffusion problem for some non-singular convolution type operators. In 
\cite{rossi3} this analysis was extended for  non-local $p-$Laplacian type 
diffusion equations. Some properties about the first eigenvalue of the 
fractional Dirichlet $p-$Laplacian were established in \cite{FP,LL} and up to our knowledge no investigations were made about fractional Neumann eigenvalues.

\bigskip 

To be more concrete, we will study 
the asymptotic behavior of the first non-zero eigenvalue $\lambda_1(s,p)$ 
as $s\to 1^-$ and as $p\to \infty.$

\medskip

In order to introduce our results, we need to mention the well-known result
of Bourgain, Br\'ezis and Mironescu \cite{bourgain}: for any smooth bounded 
domain $\Omega\subset\R^n,$ $u\in W^{1,p}(\Omega)$ with 
$1< p<\infty$ there exists a constant $\K=\K(n,p,\Omega)$ such that
\begin{equation}\label{eq:auxint}
	\lim_{s\to 1^-} \K(1-s)\int_{\Omega}\int_{\Omega}
	\dfrac{|u(x)-u(y)|^p}{|x-y|^{n+sp}}\, dxdy=\int_{\Omega}|\nabla u|\, dx.
\end{equation}
See Theorem \ref{teo:bbm1} for more details.

\bigskip

Our first result is related to the limit as $s\to 1^-$ of  
 $\lam_1(s,p).$ We show that such that $\K(1-s)\lam_1(s,p)$  
goes to 
\begin{equation*}
  \lambda_1(1,p)\coloneqq
            \inf\left\{
            \dfrac{\|\nabla u\|^p_{\lp}}
            {\|u\|^p_{\lp}}\colon v\in 
	  \mathcal{X}_{1,p}
            \right\},
\end{equation*}
that is, the first non-zero eigenvalue  of the $p-$Laplacian 
with Neumann boundary conditions, namely $\lam_1(1,p)$ is the 
first non-zero eigenvalue of
\begin{equation} \label{ecu1-neuintro}
	\begin{cases}
		-\Delta_p u = \lam |u|^{p-2}u &\quad \mbox{ in } \Omega, \\
		\frac{\partial u}{\partial \nu} =0 &\quad \mbox{ on } 
		\partial \Omega.
	\end{cases}
\end{equation}
where $\Delta_p u= \mathrm{div} (|\nabla u|^{p-2}\nabla u)$ is the usual 
$p-$Laplacian and $\nu$ is the outer unit normal to $\partial \Omega$.

\begin{thm}\label{teo:conver1}
	Let $\Omega$ be a smooth bounded domain in $\R^n,$ and 
	$p\in(1,\infty).$ Then
	\[
		\lim_{s\to 1^-}  \K(1-s) \lam_1(s,p) =\lam_1(1,p),
	\]
	where   $\K$ is the constant in \eqref{eq:auxint}. 
\end{thm}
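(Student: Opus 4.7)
My plan is to prove the two inequalities $\limsup_{s\to 1^-}\K(1-s)\lambda_1(s,p)\le \lambda_1(1,p)$ and $\liminf_{s\to 1^-}\K(1-s)\lambda_1(s,p)\ge \lambda_1(1,p)$ separately, using the BBM formula \eqref{eq:auxint} for the upper bound and a compactness/$\Gamma$-liminf argument for the lower bound.

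For the upper bound, I would fix a minimizer $u\in W^{1,p}(\Omega)$ of the Rayleigh quotient defining $\lambda_1(1,p)$; in particular $\int_\Omega |u|^{p-2}u\,dx=0$ and $u\neq 0$, so $u\in \mathcal{X}_{s,p}$ for every $s\in(0,1)$ because $W^{1,p}(\Omega)\hookrightarrow W^{s,p}(\Omega)$. Plugging $u$ into the variational characterization of $\lambda_1(s,p)$ gives
\[
\K(1-s)\lambda_1(s,p)\le \frac{\K(1-s)\displaystyle\int_\Omega\int_\Omega\frac{|u(x)-u(y)|^p}{|x-y|^{n+sp}}\,dx\,dy}{\|u\|_{L^p(\Omega)}^p}.
\]
Letting $s\to 1^-$ and applying \eqref{eq:auxint} (Theorem \ref{teo:bbm1}) the right-hand side converges to $\|\nabla u\|_{L^p(\Omega)}^p/\|u\|_{L^p(\Omega)}^p=\lambda_1(1,p)$.

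For the lower bound, I would take, for each $s$ close to $1$, a minimizer $u_s\in\mathcal{X}_{s,p}$ normalized by $\|u_s\|_{L^p(\Omega)}=1$, so that
\[
\K(1-s)\int_\Omega\int_\Omega\frac{|u_s(x)-u_s(y)|^p}{|x-y|^{n+sp}}\,dx\,dy=\K(1-s)\lambda_1(s,p).
\]
The upper bound already established shows this quantity is bounded uniformly as $s\to 1^-$. I would then invoke a compactness result in the BBM spirit (a Ponce-type theorem, which I expect to be recorded in the preliminaries section): a sequence with uniformly bounded $(1-s)$-weighted Gagliardo seminorms and bounded in $L^p$ is relatively compact in $L^p(\Omega)$ and any accumulation point $u$ lies in $W^{1,p}(\Omega)$, with the lower semicontinuity estimate
\[
\int_\Omega|\nabla u|^p\,dx\le \liminf_{s\to 1^-}\K(1-s)\int_\Omega\int_\Omega\frac{|u_s(x)-u_s(y)|^p}{|x-y|^{n+sp}}\,dx\,dy.
\]
Passing to a subsequence, $u_{s_k}\to u$ in $L^p(\Omega)$, so $\|u\|_{L^p(\Omega)}=1$ (hence $u\neq 0$) and $\int_\Omega|u|^{p-2}u\,dx=0$ by dominated convergence applied to $|u_{s_k}|^{p-2}u_{s_k}\to |u|^{p-2}u$ in $L^{p/(p-1)}(\Omega)$. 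Thus $u\in\mathcal{X}_{1,p}$ is admissible and the lower semicontinuity yields $\lambda_1(1,p)\le \liminf_{s\to 1^-}\K(1-s)\lambda_1(s,p)$, completing the proof.

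The main obstacle is the lower bound: one needs a genuine compactness statement in $L^p$ for sequences $(u_s)$ with uniformly bounded weighted Gagliardo seminorms, together with the accompanying lower-semicontinuity inequality. I expect this to be available as a variant of the Bourgain--Br\'ezis--Mironescu theorem (due to Ponce or stated explicitly in Section~\ref{pre}); without it the passage to the limit in the constraint $\int_\Omega |u_s|^{p-2}u_s\,dx=0$ and in the numerator cannot be justified. Once these tools are in hand, the two-sided bound combines to give the claimed equality.
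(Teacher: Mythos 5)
Your proposal is correct and follows essentially the same route as the paper: the upper bound by testing with a minimizer of $\lambda_1(1,p)$ and invoking Theorem \ref{teo:bbm1}, and the lower bound by normalizing minimizers $u_s$, applying the BBM-type compactness and lower-semicontinuity statement (Theorem \ref{teo:bbm2} together with the comparison inequality \eqref{desig}, packaged in the paper as Lemma \ref{lema1}), and passing the constraint $\int_\Omega|u_s|^{p-2}u_s\,dx=0$ to the limit via strong $L^{p/(p-1)}$ convergence of $|u_s|^{p-2}u_s$. The only cosmetic difference is the normalization (unit $L^p$ norm versus unit weighted seminorm), which does not affect the argument.
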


\bigskip

Lastly we study the limit case  $p\to\infty$. 
We show that
\begin{align*} 
	\lam_1(s,\infty)\coloneqq \lim_{p\to\infty} \lam_1(s,p)^{\frac{1}{p}
			}=\frac{2}{\mathrm{diam}(\Omega)^s}.	
\end{align*}	
Here $\mathrm{diam}(\Omega)$  
denotes diameter of $\Omega$, that is
\[
	\mathrm{diam}(\Omega)= \sup_{x,y\in\Omega} |x-y|.
\]

This result is truly different than that obtained in the local case, in contrast 
with the Dirichlet $p$-fractional Laplacian.  
More precisely, in \cite{RS} the authors show that
$$
	\lam_1(1,\infty)=
	\lim_{p\to\infty} \lam_1(1,p)^\frac{1}{p}  = 
	\frac{2}{\mathrm{diam}_{\Omega}(\Omega)},
$$
where 
$$
	\lambda_1(1,\infty)\coloneqq
	\inf\left\{\|\nabla u\|_{L^{\infty}(\Omega)}\colon
	u\in W^{1,\infty}(\Omega) \mbox{ s.t. } \max_{\Omega} u
	=-\min_{\Omega} u=1
	\right\},
$$
and $\mathrm{diam}_{\Omega}(\Omega)$ is the intrinsic diameter of $\Omega$, 
that is
\[
	\mathrm{diam}_{\Omega}(\Omega)= \sup_{x,y\in\Omega} d_{\Omega}(x,y)
\]
with $d_\Omega$ denoting the geodesic distance in $\Omega$.
Moreover, they show that if $u_p$ is a normalized minimizer of
$\lambda_1(1,p),$ then up to a subsequence, $u_p$ converge in 
$C(\overline{\Omega})$ to some minimizer 
$u\in W^{1,\infty}(\Omega)$ of $\lambda_1(1,\infty)$ which is a
solution of  
\[
	\begin{cases}
		\max \left \{ \Delta_{\infty} u, -|\nabla u|
		+\lambda_1(1,\infty)u \right \} &\text{ in }\{x\in\Omega\colon
		u(x)>0\},\\	
		\min \left \{ \Delta_{\infty} u, |\nabla u|
		+\lambda_1(1,\infty)u \right \} &\text{ in }\{x\in\Omega\colon
		u(x)<0\},\\	
		\Delta_{\infty} u=0  &\text{ in }\{x\in\Omega\colon
		u(x)=0\},\\
		\dfrac{\partial u}{\partial \nu}=0 &\text{ on }\partial \Omega,
	\end{cases}
\]
in the viscosity sense, where $\Delta_\infty$ is the $\infty-$Laplacian, 
that is 
\[
	\Delta_{\infty} u = -\sum_{i,j=1}^N \dfrac{\partial u}{\partial x_j}
	\dfrac{\partial^2 u}{\partial x_j\partial x_i}
	\dfrac{\partial u}{\partial x_j}.
\]
See also \cite{inftyN}.

\medskip

For the local Dirichlet $p-$Lapalcian eigenvalue problem the same limit
was studied in \cite{inftyD1, inftyD2}, where
the authors show that
\[
	\lim_{p\to\infty} \mu_1(1,p)^\frac{1}{p}= 
	\frac{1}{R(\Omega)}
	=\mu_1(1,\infty)\coloneqq\inf\left\{\dfrac{\|\nabla u\|_{L^{\infty}(\Omega)}}
	{\|u\|_{L^{\infty}(\Omega)}}\colon u\in W^{1,\infty}_0(\Omega),
	u\neq0\right\}.
\]
Here $R(\Omega)$ denotes the inradius (the radius of the largest ball 
contained in $\Omega$) and
$\mu_1(1,p)$ is the first  eigenvalue of the Dirichlet 
$p-$Laplacian. In addition,
they prove that the positive normalized eigenfunction $v_p$ associated to 
$\mu(1,p)$ converge, up to a subsequence, to a positive function 
$v\in W^{1,\infty}_0(\Omega)$ which is a minimizer of 
$\mu(1,\infty)$ and is a viscosity solution of
\[	
	\begin{cases}
		\min\{|Du|-\mu_1(1,\infty),\Delta_{\infty} u\}=0 
		&\text{ in }\Omega,\\
		u=0 &\text{ on }\partial\Omega.	
	\end{cases}
\]

Recently, the Dirichlet fractional $p-$Laplacian is considered, 
in \cite{LL} it was proved that
$$
	\lim_{p\to\infty} \mu_1(s,p)^\frac{1}{p} 
	= \frac{1}{R(\Omega)^s}
	=\mu_1(s,\infty)\coloneqq
	\inf\left\{\dfrac{[\phi]_{W^{s,\infty}(\Omega)}}
	{\|\phi\|_{L^{\infty}(\Omega)}}\colon \phi\in C_0^{\infty}(\Omega),
	\phi\neq0\right\},
$$
where $\mu_1(s,p)$ is the first eigenvalue of the non-local eigenvalue problems
\[
	\begin{cases}
		2\displaystyle\int_{\R^n} 
			\dfrac{|u(x)-u(y)|^{p-2}(u(x)-u(y))}{|x-y|^{n+sp}} 
			\, dy +\lam|u(x)|^{p-2}u(x)=0 &\textrm{ in } \Omega,\\[1em] 
		u\equiv 0 &\textrm { in } \R^n\setminus \Omega.
	\end{cases}
\]
Moreover, they show that if $w_{p}$ is a minimizer of $\mu_1(s,p),$  
then there exists $w\in C_0(\overline{\Omega})$ such that, up to
a subsequence $w_{p}\to w$ uniformly in  $\R^n$ which is a minimizer of 
$\mu_1(s,\infty)$ and is a solution of
\begin{gather*}
	\begin{cases}
	\max \left \{\mathcal{L}_\infty u(x), \mathcal{L}_\infty^- u(x)
	+\mu_1(s,\infty)u(x) \right \}=0 &\text{ in }\Omega,\\
	u=0 &\text{ on }\partial\Omega,	
	\end{cases}
\end{gather*}
in the viscosity sense. Here
\[
	\mathcal{L}_\infty u(x)\coloneqq
	\sup_{y\in\R^n}\dfrac{u(y)-u(x)}{|y-x|^s} 
	+\inf_{y\in\R^n}\dfrac{u(y)-u(x)}{|y-x|^s},
\]
and
\[
	\mathcal{L}_\infty^- u(x)\coloneqq
	\inf_{y\in\R^n}\dfrac{u(y)-u(x)}{|y-x|^s}.\\
\]

\medskip

In this context, our result is the following. 

\begin{thm}\label{teo:ptoinfty}
	Let $\Omega$ be  bounded open connected domain in $\R^n$ and 
	$s\in(0,1)$. Then
	\[
		\lim_{p\to\infty} \lam_1(s,p)^{\frac{1}{p}
		}=\frac{2}{\mathrm{diam}(\Omega)^s}=
		\lam_1(s,\infty)\coloneqq 
		\inf\left\{\dfrac{[u]_{W^{s,\infty}(\Omega)}}
		{\|u\|_{L^{\infty}(\Omega)}}\colon
		u\in \mathcal{A} \right\},	
	\]
	where $\mathcal{A}\coloneqq\left\{u\in W^{s,\infty}(\Omega)
	\colon u\neq0,
	\, \sup u+\inf u=0\right\}.$
	Moreover, if $u_p$ is the normalizer minimizer of
	$\lambda_1(s,p),$ then up to a subsequence 
	$u_p$ converges in $C(\overline{\Omega})$ to some minimizer 
	$u_\infty \in W^{s,\infty}(\Omega)$ of $\lambda_1(s,\infty)$
	which is a viscosity solution of  
	\begin{equation} \label{ecuvisc}
			\begin{cases}
				\max\{\LL_{s,\infty} u(x), \LL_{s,\infty}^- u(x) 
				+\lam_1(s,\infty) u(x)\}=0  & \mbox{ when }u(x)>0,\\
				\LL_{s,\infty} u(x)=0  & \mbox{ when }u(x)=0,\\
				\min\{\LL_{s,\infty} u(x), \LL_{s,\infty}^+ u(x) +
				\lam_1(s,\infty) u(x)\}
				=0  & \mbox{ when }u(x)<0,\\
			\end{cases}
	\end{equation}
	where $\LL_{s,\infty}u\coloneqq\LL_{s,\infty}^+u +\LL_{s,\infty}^-u,$ 
	$$
		\LL_{s,\infty}^+ u(x) \coloneqq
		\sup_{y\in\overline{\Omega}, y\neq x}
		\frac{u(y)-u(x)}{|y-x|^{s}}\quad \mbox{ and }\quad 
		\LL_{s,\infty}^- u(x) \coloneqq
		\inf_{y\in\overline{\Omega}, y\neq x}
		\frac{u(y)-u(x)}{|y-x|^{s}}.
	$$
\end{thm}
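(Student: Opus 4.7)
The plan is to establish the three assertions in sequence: the limit $\lim_p \lambda_1(s,p)^{1/p}=2\,\mathrm{diam}(\Omega)^{-s}=\lambda_1(s,\infty)$, the uniform convergence $u_p\to u_\infty$ along a subsequence, and the viscosity identity \eqref{ecuvisc}.

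First I identify $\lambda_1(s,\infty)=2\,\mathrm{diam}(\Omega)^{-s}$. The lower bound is immediate: any $u\in\mathcal{A}$ is H\"older continuous on $\overline\Omega$ with $\sup u=-\inf u=\|u\|_{L^\infty(\Omega)}$, so evaluating the H\"older quotient at extremum points $x_0,y_0\in\overline\Omega$ and using $|x_0-y_0|\le\mathrm{diam}(\Omega)$ gives the estimate. For the upper bound I test with the affine function $\phi(x)=(2/D)(x\cdot e - m)$, where $D=\mathrm{diam}(\Omega)$, $e$ is a unit vector along a diameter-realizing chord, and $m$ enforces $\sup\phi+\inf\phi=0$; using $|x\cdot e-y\cdot e|\le|x-y|$ one checks $[\phi]_{W^{s,\infty}(\Omega)}=2D^{-s}$ (equality at the chord endpoints) and $\|\phi\|_{L^\infty(\Omega)}=1$. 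Replacing $\phi$ by $\phi-c_p$, where $c_p$ is the unique constant with $\phi-c_p\in\mathcal{X}_{s,p}$, supplies the upper bound $\limsup_p\lambda_1(s,p)^{1/p}\le 2D^{-s}$: the Lipschitz estimate reduces $[\phi]_{W^{s,p}(\Omega)}^p$ to a multiple of $\iint|x-y|^{(1-s)p-n}\,dx\,dy\le C(n,|\Omega|)D^{(1-s)p}/((1-s)p)$, so $[\phi]_{W^{s,p}(\Omega)}^{1/p}\to 2D^{-s}$, while standard arguments give $c_p\to m$ and $\|\phi-c_p\|_{L^p(\Omega)}\to 1$.

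For the matching liminf and the compactness of $L^p$-normalized minimizers $u_p$ (so $[u_p]_{W^{s,p}(\Omega)}=\lambda_1(s,p)^{1/p}$), H\"older's inequality applied to the Gagliardo integrand yields
\[
[u_p]_{W^{t,q}(\Omega)}^q\le[u_p]_{W^{s,p}(\Omega)}^q\Bigl(\iint_{\Omega\times\Omega}|x-y|^{-n+q(s-t)p/(p-q)}\,dx\,dy\Bigr)^{(p-q)/p}
\]
for $t<s$ and $q<p$, and the double integral is uniformly bounded as $p\to\infty$ because the exponent of $|x-y|$ stays strictly above $-n$. Hence $\{u_p\}$ is bounded in $W^{t,q}(\Omega)$, and for $q>n/t$ the compact embedding $W^{t,q}\hookrightarrow C^\alpha(\overline\Omega)$ with Arzel\`a--Ascoli gives $u_p\to u_\infty$ uniformly along a subsequence. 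The normalization forces $\|u_\infty\|_{L^\infty(\Omega)}=1$, and the constraint $\|u_p^+\|_{L^{p-1}(\Omega)}=\|u_p^-\|_{L^{p-1}(\Omega)}$ passes to the limit to give $\sup u_\infty=-\inf u_\infty=1$, so $u_\infty\in\mathcal{A}$. Picking $x_0,y_0\in\overline\Omega$ where $u_\infty$ attains $\pm 1$, uniform convergence produces $|u_p(\xi)-u_p(\eta)|\ge 2-o(1)$ on $B_r(x_0)\times B_r(y_0)$, whence $\lambda_1(s,p)^{1/p}\ge (2-o(1))|B_r|^{2/p}(|x_0-y_0|+2r)^{-n/p-s}$; sending $p\to\infty$ then $r\to 0^+$ yields $\liminf_p\lambda_1(s,p)^{1/p}\ge 2/|x_0-y_0|^s\ge 2\,\mathrm{diam}(\Omega)^{-s}$, and the same estimate with arbitrary $x_0,y_0$ bounds $[u_\infty]_{W^{s,\infty}(\Omega)}$ above by $\lim_p\lambda_1(s,p)^{1/p}$, so $u_\infty$ is a minimizer.

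For the viscosity identity, the key substitution $F(y;x)=(\phi(y)-\phi(x))/|y-x|^s$ rewrites $-\LL_{s,p}\phi(x)=-2\,\mathrm{p.v.}\int_\Omega|F(y;x)|^{p-2}F(y;x)\,|y-x|^{-n-s}\,dy$, which isolates the H\"older quotients. Given $\phi\in C^2$ touching $u_\infty$ at $x_0$ with $u_\infty(x_0)>0$, standard viscosity perturbation provides $x_p\to x_0$ realizing local extrema of $u_p-\phi$, so $-\LL_{s,p}\phi(x_p)\le\lambda_1(s,p)|u_p(x_p)|^{p-2}u_p(x_p)$ (reverse inequality for tests from below). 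Decomposing $F=F^+-F^-$ and applying the classical $L^{p-1}\to L^\infty$ convergence with respect to the finite measure $2|y-x_0|^{-n-s}\,dy$ gives $\bigl(\int_\Omega(F^\pm)^{p-1}\,d\mu\bigr)^{1/(p-1)}\to\sup_y F^\pm(y;x_0)$, from which $\LL_{s,\infty}^+\phi(x_0)$ and $\LL_{s,\infty}^-\phi(x_0)$ are identified; combining with $\lambda_1(s,p)^{1/(p-1)}\to\lambda_1(s,\infty)$ and $|u_p(x_p)|^{(p-2)/(p-1)}\mathrm{sign}(u_p(x_p))\to\mathrm{sign}(u_\infty(x_0))$ yields the first branch of \eqref{ecuvisc}, with the other two cases analogous. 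The main obstacle is this final step: the $(p-1)$-th roots of the positive and negative parts of the integrand must be compared carefully, and the $\max$/$\min$ structure in \eqref{ecuvisc} arises precisely because either extremal quotient can dominate depending on the sign of $u_\infty(x_0)$ and the local geometry of $\phi$ at $x_0$.
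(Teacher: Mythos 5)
Your overall architecture matches the paper's: an explicit competitor for the upper bound, H\"older interpolation of Gagliardo seminorms plus a compact embedding to get uniform convergence of the normalized minimizers, a direct estimate for the lower bound, and passage to the limit in the viscosity inequalities at touching points $x_p\to x_0$. Two sub-arguments differ from the paper, and both work: you test with an affine function along a diameter-realizing chord where the paper uses the cone $w_p(x)=|x-x_0|-c_p$ of Lemma \ref{lem:aux1}, and you obtain $\liminf_p\lambda_1(s,p)^{1/p}\ge 2|x_0-y_0|^{-s}$ by localizing the Gagliardo quotient on small balls around the two extremum points of $u_\infty$, whereas the paper goes through weak lower semicontinuity of $[\cdot]_{W^{r,q}(\Omega)}$ with $r=s-\nicefrac{n}{q}$ and then lets $q\to\infty$. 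Your localization is arguably more elementary and simultaneously yields $[u_\infty]_{W^{s,\infty}(\Omega)}\le\lim_p\lambda_1(s,p)^{1/p}$, which is what makes $u_\infty$ a minimizer.

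The genuine gap is in the viscosity step. You assert that $\bigl(\int_\Omega(F^\pm)^{p-1}\,d\mu\bigr)^{1/(p-1)}\to\sup_yF^\pm(y;x_0)$ by ``classical $L^{p-1}\to L^\infty$ convergence with respect to the finite measure $d\mu=2|y-x_0|^{-n-s}\,dy$''. That measure is \emph{not} finite: $\int_\Omega|y-x_0|^{-n-s}\,dy=\infty$ since $n+s>n$. The integrals are finite only because the numerator $(F^\pm)^{p-1}=\bigl((\varphi(y)-\varphi(x_p))^\pm\bigr)^{p-1}|y-x_p|^{-s(p-1)}$ decays like $|y-x_p|^{(1-s)(p-1)}$ near the singular point thanks to the $C^1$ regularity of the test function, and proving that the $(p-1)$-th roots nevertheless converge to the supremum of the H\"older quotient --- with the added complication that the base point $x_p$ moves with $p$ --- requires splitting off a small ball around $x_p$ and treating the complement, where the measure is finite, separately. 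This is precisely the content of \cite[Lemma 6.5]{CLM}, which the paper invokes for the limits $A_j\to\LL^+_{s,\infty}\varphi(x_0)$ and $B_j\to-\LL^-_{s,\infty}\varphi(x_0)$; without it your identification of these limits is unjustified. A smaller slip in the same step: $\bigl(\lambda_1(s,p)|u_p(x_p)|^{p-2}u_p(x_p)\bigr)^{1/(p-1)}$ tends to $\lambda_1(s,\infty)\,u_\infty(x_0)$ (when $u_\infty(x_0)\neq0$), not to $\lambda_1(s,\infty)\,\mathrm{sign}(u_\infty(x_0))$, because $|u_p(x_p)|^{(p-2)/(p-1)}\to|u_\infty(x_0)|$; equation \eqref{ecuvisc} needs the former.
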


The operator $\LL_{s,\infty}$ is the H\"older $\infty-$Laplacian, see
\cite{CLM}.
\bigskip

Let us conclude the introduction with a brief comment on previous bibliography
that concerns mostly the non-local operators. 

\medskip

One of the biggest interests in defining the operator $\LL_{s,p}$ lies in its 
probabilistic interpretation in relation of a restricted type of L\'evy 
processes. 
In  \cite{BG}, it was studied the $s-$stable processes, a particular kind of 
L\'evy processes. For $s\in(0,1)$ and $n\geq 1$ they proved that the 
Dirichlet form associated with a symmetric $s-$stable  process in $\R^n$ is 
given by
$$
	\mathrm	{E}(u,v)=C\int_{\R^n} \int_{\R^n}
	\frac{(u(x)-u(y))(v(x)-v(y))}{|x-y|^{n+2s}} \, dx \, dy,
$$
where $u,v$ belong to $W^{s,2}(\R^n)$ and $C$ is a constant depending on $n$ 
and $s$. It is well known that $\mathrm	{E}$ is related to  
the fractional Laplacian $(-\Delta)^s$, that is 
\[
	(-\Delta)^s u(x)=C\mbox{ p.v.}\int_{\mathbb{R}^n} 
	\frac{u(x)-u(y)}{|x-y|^{n+2s}} 
	\, dy\quad \forall u\in W^{s,p}(\mathbb{R}^n)
\]
where $C$ is a constant depending on $n$ and $s,$  
precisely given by
\[
	C=\left(\int_{\R^n}
	\dfrac{1-\cos(\xi_1)}{|\xi|^{n+2s}} d\xi\right)^{-1},
\]
see \cite[Section 3]{DNPV}.

Due to the action of the process in the whole space it was widely used to model 
systems of stochastic dynamics with applications in operation research, queuing 
theory, mathematical finance among others, see \cite{AFT, Ap, CST} for instance.

\medskip

If one wished to restrict the action of a process to a bounded domain $\Omega
\subset \R^n$, one could consider the so-called \emph{$s-$stable process killed 
when leaving $\Omega$}, in which the Dirichlet form still being the same, but 
the functions are taken with support in $\Omega$, see \cite{neu2}.

\medskip

Alternatively, another way is to study the so-called 
\emph{censored stable process}, that is a stable process in which the jumps 
between $\Omega$ and its complement are forbidden. 
In this case, the functions are taken in the fractional Sobolev space 
$W^{s,2}(\Omega)$ and the correspondent Dirichlet form is given by
$$
	\mathcal{E}(u,v)=C\int_{\Omega} \int_{\Omega} \frac{(u(x)-u(y)(v(x)-v(y))}{|x-y|^{n+2s}} \, dx \, dy.
$$
This kind of processes are generated by 
\begin{equation} \label{regional}
	\Delta^s_\Omega u(x)=C
	\mbox{ p.v.}\int_{\Omega} \frac{u(x)-u(y)}{|x-y|^{n+2s}} \, dy.
\end{equation}
which   is called \emph{regional fractional Laplacian} in $\Omega$. 
See \cite{neu2,chen,aaa1,dida,GMZ} and references therein. 


\medskip

From a physical point of view, this operator describes a 
particle jumping from one point $x\in\Omega$ to another point $y\in\Omega$ with intensity proportional to $|x-y|^{-n-2s}$. Moreover, this kind of process can be used to describe some random flow in a closed domain with free action on the boundary, and they are always connected to the Neumann boundary problems. As it was pointed in \cite{rossi1,rossi2} the idea of $s-$process in which its jumps from $\Omega$ to the complement of $\Omega$ are suppressed, are related to the Neumann non-local evolution equation 
\begin{align}
\begin{cases}
	u_t(x,t)=\Delta^s_\Omega u(x)\\
	u\in W^{s,2}(\Omega)
\end{cases}
\end{align}
since the individuals are ``forced" to stay inside $\Omega$. In contrast with 
the classical heat equation $u_t=\Delta u$, the diffusion of the density $u$ at 
a point $x$ and a time $t$ depends not only on $u(x,t)$, but also on all values 
of $u$ in a neighborhood of $x$.

\bigskip

In the course of the writing of this paper, the authors in \cite{DRV} introduced 
a new Neumann problem for the fractional Laplacian by considering the non-local 
prescription 
\[
	\mbox{p.v.} \int_\Omega \dfrac{u(x)-u(y)}{|x-y|^{n+2s}}\, dy=0
\] for $x\in \R^n\setminus \Omega$ as a generalization of the classical 
Neumann condition $\partial_\nu u=0$ on $\partial \Omega$.

\bigskip

The paper is organized as follows: in Section \ref{pre} we collect some 
preliminaries; in Section \ref{autovalor} we deal with the first 
non-zero eigenvalue; in Section \ref{sto1} we prove Theorem \ref{teo:conver1};
in Section \ref{pinfty} we prove Theorem \ref{teo:ptoinfty},
while in the final section we give an example of non-linear non-local 
operator such that its first non-zero eigenvalue $\mu(s,p)$ has 
the following property: $\mu(s,p)^{\nicefrac1p}\to\nicefrac{2}
{\mathrm{diam}_\Omega(\Omega)}$ as $p\to\infty.$

\section{Preliminaries}\label{pre}

We begin by recalling some results concerning the fractional Sobolev 
spaces.

\medskip

Let $\Omega$ be an open set in $\R^n,$ $s\in(0,1)$ and $p\in[1,\infty).$
The fractional Sobolev spaces is defined as
\[
	W^{s,p}(\Omega)\coloneqq
	\left\{u\in L^p(\Omega) \colon 
	\frac{|u(x)-u(y)|}{|x-y|^{n/p+s}}\in L^p(\Omega\times\Omega)\right\},
\] 
which endowed with the norm
\[
	\|u\|^p_{W^{s,p}(\Omega)}\coloneqq
	  \|u\|_{L^p(\Omega)}^p + 
	  \int_\Omega \int_\Omega \frac{|u(x)-u(y)|^p}{|x-y|^{n+sp}} \, 
	  dx \,dy,
\]
is a separable Banach space. Moreover, if $p\in(1,\infty)$ then
$\wsp$ is reflexive.

The fractional space $W^{s,\infty}(\Omega)$ is defined as the space of functions
\[
	W^{s,\infty}(\Omega)\coloneqq \left\{ u\in L^\infty(\Omega) \colon 
	\frac{u(x)-u(y)}{|x-y|^s} \in L^\infty(\Omega\times\Omega)\right\}
\]
with the norm
\[
	\|u\|_{W^{s,\infty}(\Omega)} \coloneqq \|u\|_{L^\infty(\Omega)} + \left
	\|\frac{u(x)-u(y)}{|x-y|^s} \right\|_{L^\infty(\Omega\times\Omega)}.
\]

Throughout the paper $[u]_{\wsp}$ denotes the  so-called Gagliardo seminorm 
\[
	[u]_{W^{s,p}(\Omega)}\coloneqq 
	\begin{cases}
		\left(\displaystyle\int_\Omega \int_\Omega 
		\dfrac{|u(x)-u(y)|^p}{|x-y|^{n+sp}} \, dx \,dy\right)^{\frac1p}, 
		&\text{ if }1\le p<\infty,\\[.35cm]
		\displaystyle\left
		\|\frac{u(x)-u(y)}{|x-y|^s} \right\|_{L^\infty(\Omega\times\Omega)}
		&\text{ if }p=\infty.
	\end{cases}
\]

  For more details related these spaces and their properties, 
see, for instance, \cite{Adams,DD,DNPV}. 

\medskip
 
The proof of the following lemma can be found in \cite{DD}.

\begin{lema}\label{lem:densidad}
	Let $\Omega\subset\R^n$ be an open set of class $C^1.$ Then
	$C^1(\overline{\Omega})$ is dense in $\wsp.$ 
\end{lema}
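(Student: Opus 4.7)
The plan is the standard extension-plus-mollification argument used throughout the fractional Sobolev literature. First I would invoke the extension theorem for $W^{s,p}(\Omega)$ on $C^1$ domains: for every $u\in W^{s,p}(\Omega)$ there exists $\tilde u\in W^{s,p}(\R^n)$ with $\tilde u|_\Omega=u$ and $\|\tilde u\|_{W^{s,p}(\R^n)}\le C\|u\|_{W^{s,p}(\Omega)}$, where $C=C(n,s,p,\Omega)$. This is where the $C^1$ regularity of $\partial\Omega$ is used, and it reduces the problem to approximating an element of $W^{s,p}(\R^n)$ by smooth functions.

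Next I would fix a standard mollifier $\rho\in C_c^\infty(\R^n)$ with $\rho\ge 0$ and $\int\rho=1$, set $\rho_\varepsilon(x)=\varepsilon^{-n}\rho(x/\varepsilon)$, and define $u_\varepsilon\coloneqq\rho_\varepsilon\ast \tilde u\in C^\infty(\R^n)$. Since $\Omega$ is bounded in all cases of interest, $u_\varepsilon|_{\overline{\Omega}}\in C^\infty(\overline{\Omega})\subset C^1(\overline{\Omega})$. The goal is then to show $u_\varepsilon|_\Omega \to u$ in $W^{s,p}(\Omega)$ as $\varepsilon\to 0^+$. The $L^p$ part is standard; for the Gagliardo seminorm I would introduce
\[
	F(x,y)\coloneqq \frac{\tilde u(x)-\tilde u(y)}{|x-y|^{n/p+s}},
	\qquad
	F_\varepsilon(x,y)\coloneqq \frac{u_\varepsilon(x)-u_\varepsilon(y)}{|x-y|^{n/p+s}},
\]
and observe that $F\in L^p(\R^n\times\R^n)$ because $\tilde u\in W^{s,p}(\R^n)$, and that Fubini gives the representation $F_\varepsilon(x,y)=\int_{\R^n}\rho_\varepsilon(z)F(x-z,y-z)\,dz$, i.e.\ $F_\varepsilon$ is a simultaneous translation-average of $F$.

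The main analytic step, and really the only delicate one, is then $\|F_\varepsilon-F\|_{L^p(\R^n\times\R^n)}\to 0$. I would obtain this from continuity of translations in $L^p$ applied to $F$ and Minkowski's integral inequality:
\[
	\|F_\varepsilon-F\|_{L^p}\le \int_{\R^n}\rho_\varepsilon(z)\,\|\tau_z F - F\|_{L^p}\,dz,
\]
where $\tau_z F(x,y)=F(x-z,y-z)$, and then let $\varepsilon\to 0^+$ using that $z\mapsto \tau_z F$ is continuous $\R^n\to L^p(\R^n\times\R^n)$ together with $\mathrm{supp}\,\rho_\varepsilon\to\{0\}$. Combining this with $u_\varepsilon\to \tilde u$ in $L^p(\R^n)$ yields $u_\varepsilon\to \tilde u$ in $W^{s,p}(\R^n)$; restricting to $\Omega$ gives $u_\varepsilon|_\Omega\to u$ in $W^{s,p}(\Omega)$, and since each $u_\varepsilon|_{\overline{\Omega}}$ lies in $C^1(\overline{\Omega})$, the density is proved. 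The main obstacle is precisely this seminorm convergence; everything else is formal manipulation once the extension theorem is granted.
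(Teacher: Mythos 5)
Your argument is correct: the extension theorem for $C^1$ (hence Lipschitz) bounded domains reduces the problem to $W^{s,p}(\R^n)$, the identity $F_\varepsilon(x,y)=\int\rho_\varepsilon(z)F(x-z,y-z)\,dz$ is valid because $|x-y|$ is translation invariant, and Minkowski's inequality plus continuity of translations in $L^p(\R^n\times\R^n)$ along the diagonal gives convergence of the Gagliardo seminorm. The paper itself does not prove this lemma but only cites \cite{DD}; your extension-plus-mollification proof is the standard one and is sound (modulo the implicit boundedness of $\Omega$, which you flag and which holds in the paper's setting).
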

The next results are established in  
\cite[Corollaries 2 and 7]{bourgain}.

\begin{thm}
	\label{teo:bbm1}	
	Let $\Omega$ be a smooth bounded domain in $\R^n,$ and 
	$p\in(1,\infty).$
	Assume $u\in\lp,$ then
	\[
		\lim_{s\to 1^-}\K(1-s)[u]_{\wsp}^p=[u]_{\wup}^p 
	\]
	with
	\[
		[u]_{\wup}^p =\begin{cases}
			\displaystyle\int_{\Omega}|\nabla u|^p \, dx, 
				&\text{ if } u\in\wup,\\
				\infty &\text{ if } u\notin\wup.
			\end{cases}	
	\]
	Here $\K$ depends only the $p$ and $\Omega.$ 	
\end{thm}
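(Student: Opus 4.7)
The plan is to carry out the original Bourgain--Brezis--Mironescu argument directly on the bounded domain $\Omega$ and to identify the explicit constant $\K$. For $u\in C^2(\overline{\Omega})$ I change variables $h=y-x$ and write $u(x+h)-u(x)=\nabla u(x)\cdot h+R(x,h)$ with $|R(x,h)|\le C|h|^2$. Passing to polar coordinates $h=r\omega$ with $r\in(0,\delta(x))$, where $\delta(x)\coloneqq\mathrm{dist}(x,\partial\Omega)$, the radial integral $\int_0^{\delta(x)}r^{p-1-sp}\,dr=\delta(x)^{p(1-s)}/(p(1-s))$ exactly cancels the prefactor $(1-s)$, while $\delta(x)^{p(1-s)}\to 1$ pointwise for interior $x$. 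The angular integral $\int_{S^{n-1}}|\nabla u(x)\cdot\omega|^p\,d\sigma(\omega)$ equals $|\nabla u(x)|^p\int_{S^{n-1}}|e\cdot\omega|^p\,d\sigma(\omega)$ by rotational invariance, which forces
\[
\K=\frac{p}{\int_{S^{n-1}}|e\cdot\omega|^p\,d\sigma(\omega)}
\]
for any unit vector $e$, and gives the identity $\K(1-s)[u]^p_{\wsp}\to\int_\Omega|\nabla u|^p\,dx$ on smooth functions.

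Second, I would absorb the error contributions. The remainder $R$ contributes an integrand bounded by $|h|^{2p-n-sp}$ whose integral up to $\mathrm{diam}(\Omega)$ stays bounded uniformly for $s$ near $1$, so multiplication by $(1-s)$ makes it vanish. A parallel bound handles the tail $|h|\ge\eta$, where $|u(x+h)-u(x)|$ is merely bounded. The boundary strip $\{\delta(x)<\eta\}$, where $\delta(x)^{p(1-s)}$ may deviate from $1$, is controlled by its small Lebesgue measure together with the $C^1$ regularity of $\partial\Omega$, after which dominated convergence completes the limit identity for $u\in C^2(\overline{\Omega})$.

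Third, I would pass to general $u\in\wup$ by density and then handle the divergent case. Using Lemma \ref{lem:densidad}, for each $\ve>0$ I pick $\varphi\in C^1(\overline{\Omega})$ with $\|u-\varphi\|_{\wup}<\ve$, apply the seminorm triangle inequality $[u]_{\wsp}\le[\varphi]_{\wsp}+[u-\varphi]_{\wsp}$, multiply by $(\K(1-s))^{1/p}$, and combine the already-proved identity on $\varphi$ with a uniform bound $\K(1-s)[v]^p_{\wsp}\le C\|v\|^p_{\wup}$ (valid for $s$ close to $1$) to squeeze both $\limsup$ and $\liminf$ to $\|\nabla u\|_{\lp}^p$ as $\ve\to 0$. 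For $u\in\lp\setminus\wup$, I argue by contradiction: if $\liminf_{s\to 1^-}\K(1-s)[u]^p_{\wsp}<\infty$, then finite-difference quotients of $u$ along arbitrary directions would be uniformly bounded in $L^p$, which via weak compactness and the characterization of $\wup$ via translations forces $u\in\wup$, a contradiction.

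The main obstacle I expect is establishing the uniform bound $\K(1-s)[v]^p_{\wsp}\le C\|v\|^p_{\wup}$ with $C$ independent of $s$ close to $1$. This requires a $C^1$ extension operator $E\colon\wup\to W^{1,p}(\R^n)$ (available since $\partial\Omega\in C^1$) to compare the $\Omega\times\Omega$ integral with its $\R^n\times\R^n$ counterpart, followed by a splitting of $\int\!\!\int$ into a near-diagonal part controlled by $|\nabla v|$ and a far part controlled by $\|v\|_{\lp}$, arranged so that the total bound is stable as $s\to 1^-$ and absorbs the correction terms introduced by the extension.
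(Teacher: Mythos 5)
The paper does not actually prove Theorem \ref{teo:bbm1}: it is quoted verbatim from Bourgain--Br\'ezis--Mironescu \cite{bourgain}. Your proposal is therefore best judged as a reconstruction of that original argument, and as such its architecture is right: the polar-coordinate computation on $C^2(\overline{\Omega})$ functions producing $\K=p\left(\int_{S^{n-1}}|e\cdot\omega|^p\,d\sigma\right)^{-1}$ is correct (note $\K$ really depends only on $n$ and $p$); the three error terms (Taylor remainder, far-off-diagonal pairs, thin boundary strip) are all controlled exactly as you say, though for the remainder you should estimate the cross term $|\nabla u(x)\cdot h|^{p-1}|R(x,h)|\lesssim|h|^{p+1}$ rather than only $|R|^p\lesssim|h|^{2p}$; and the density step correctly isolates the uniform bound $(1-s)[v]^p_{\wsp}\le C\|v\|^p_{\wup}$, which your extension-plus-splitting sketch does deliver (near the diagonal use $\|v(\cdot+h)-v\|_{L^p(\R^n)}\le|h|\,\|\nabla v\|_{L^p(\R^n)}$, far from it use $\|v\|_{L^p}$).

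The one genuine thin spot is the case $u\in\lp\setminus\wup$. You assert that $\liminf_{s\to1^-}(1-s)[u]^p_{\wsp}<\infty$ ``forces finite-difference quotients of $u$ to be uniformly bounded in $L^p$.'' That implication is not a routine deduction: the Gagliardo bound controls an average over all directions and all scales with the weight $|h|^{-n-sp}$, and extracting from it a bound $\|u(\cdot+h)-u\|_{L^p(\omega)}\le C|h|$ for a \emph{fixed} translate $h$ is precisely Theorem~2 of \cite{bourgain}, proved there by a nontrivial dyadic averaging argument (comparing $u$ with its means over small cubes). This is also exactly where $p>1$ is indispensable: for $p=1$ the same finiteness only yields $u\in BV(\Omega)$, so any proof that does not visibly use $p>1$ at this stage cannot be complete. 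Once that lemma is granted, your weak-compactness conclusion is fine; so either cite it or supply that averaging argument.
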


 This result was later completed in \cite{MR1940355}, where the authors show that
for $u\in\bigcup_{s\in(0,1)}$ $W_0^{s,p}(\R^n)$ with $1\le p<\infty,$ we 
have that 
\[
	\lim_{s\to 0^+} \frac{sp}{2\omega_{n-1}}[u]_{W^{s,p}(\R^n)}^p
	=\|u\|_{L^p(\R^n)}^p.
\] 
Here the space $W_0^{s,p}(\R^n)$ is the closure of $C_0^\infty(\R^n)$
in the norm $[u]_{W^{s,p}(\R^n)}$ and $\omega_{n-1}$ is the $(n-1)$-dimensional
Hausdorff measure of the unit sphere $S^{n-1}.$

Finally in \cite{MR2146631}, the author shows that the above two result  
can be viewed as consequences of continuity principles for real interpolation scales.
 
\begin{thm}
	\label{teo:bbm2}	
	Let $\Omega$ be a smooth bounded domain in $\R^n,$  and
	$p\in(1,\infty).$ Let $\{u_s\}_{s\in(0,1)}$ be a subset of $L^p(\Omega)$
	 such that for any $s\in(0,1)$ we have that
	$u_s\in\wsp$ and
	\[
		(1-s)[u_s]_{\wsp}\le C.
	\]
	Then, there exist $u\in\wup$ and a subsequence 
	$\{u_{s_k}\}_{k\in\N}$ such that
	\begin{align*}
		u_{s_k}\to u &\quad\mbox{strongly in } \lp,\\
		u_{s_k}\rightharpoonup u&\quad\mbox{weakly in }
		 W^{1-\varepsilon,p}(\Omega),
	\end{align*}
	for all $\varepsilon> 0.$ 
\end{thm}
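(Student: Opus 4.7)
My plan is to run the standard three-step Bourgain–Brezis–Mironescu–Ponce compactness scheme for families of nonlocal functionals with concentrating kernels. \textbf{Step 1 (strong $L^p$-precompactness):} assume without loss of generality that $\{u_s\}$ is bounded in $L^p(\Omega)$ (this is automatic in every application of the theorem) and extend each $u_s$ to $\R^n$ via an extension operator whose operator norm is uniform in $s$ near $1$. The radial family $\rho_s(h)\coloneqq(1-s)\,p\,|h|^{-n-sp}\chi_{|h|<R}$ with $R\ge\mathrm{diam}(\Omega)$ has $L^1$-mass bounded and concentrates at the origin as $s\to1^-$, so the hypothesis (in the natural BBM form $(1-s)[u_s]_{\wsp}^p\le C^p$) rewrites as
\[
	\int_{\R^n}\int_{\R^n}\rho_s(x-y)\,\frac{|\tilde u_s(x)-\tilde u_s(y)|^p}{|x-y|^p}\,dx\,dy\le pC^p.
\]
This places us in the setting of Ponce's compactness principle, which extracts a subsequence $u_{s_k}\to u$ strongly in $L^p(\Omega)$.

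\textbf{Step 2 ($u\in\wup$):} the lower-semicontinuity companion of Theorem~\ref{teo:bbm1}, proven via a mollification/test-function argument or via a Fourier/Besov decomposition in the spirit of \cite{MR2146631}, asserts that whenever $u_{s_k}\to u$ in $L^p(\Omega)$ with $(1-s_k)[u_{s_k}]_{W^{s_k,p}}^p\le C^p$, one has $u\in\wup$ and
\[
	\K\,\|\nabla u\|_{L^p(\Omega)}^p\le\liminf_{k\to\infty}(1-s_k)[u_{s_k}]_{W^{s_k,p}(\Omega)}^p.
\]
In particular $u\in\wup$, which is the first half of the conclusion.

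\textbf{Step 3 (weak convergence in $W^{1-\varepsilon,p}$):} fix $\varepsilon>0$ and restrict attention to $s_k>1-\varepsilon$. The delicate point is a uniform-in-$k$ bound on $[u_{s_k}]_{W^{1-\varepsilon,p}(\Omega)}$, which I would establish by an interpolation-type splitting of the Gagliardo integral (compare \cite{MR2146631}): the local part on $\{|x-y|<\delta\}$ is controlled by $\delta^{(s_k-(1-\varepsilon))p}[u_{s_k}]_{W^{s_k,p}}^p$ and the nonlocal part on $\{|x-y|\ge\delta\}$ by $C\,\|u_{s_k}\|_{L^p}^p\delta^{-(1-\varepsilon)p}$, with $\delta$ optimised against $1-s_k$. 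Reflexivity of $W^{1-\varepsilon,p}(\Omega)$ then extracts a weakly convergent sub-subsequence; by strong $L^p$-convergence the weak limit must coincide with $u\in\wup\subset W^{1-\varepsilon,p}(\Omega)$. A diagonal extraction in $\varepsilon\downarrow0$ finally produces a single subsequence serving every $\varepsilon>0$.

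The principal obstacle is Step~1: converting the $(1-s)^{-1}$-divergent fractional seminorm bound into uniform $L^p$-equicontinuity of translations (which a naive Fréchet–Kolmogorov estimate does not yield directly) is precisely what Ponce's concentrating-kernel machinery accomplishes, and it is the technical heart of the argument. The interpolation-type bound needed in Step~3 is a secondary but genuine technical ingredient that is most sensitive to the precise exponent form of the hypothesis.
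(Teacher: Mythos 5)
First, a point of comparison: the paper does not prove Theorem \ref{teo:bbm2} at all --- it is quoted from \cite{bourgain} (Corollaries 2 and 7) --- so the only in-paper material bearing on it is the Remark containing inequality \eqref{desig}. Your overall architecture (strong $L^p$ precompactness, identification of the limit in $\wup$ by lower semicontinuity, uniform fractional bounds below the limiting exponent plus reflexivity and a diagonal argument) is the standard and correct one, and you are right that an $L^p$ bound must be added to the hypotheses for the statement to hold.

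The genuine gap is in Step 3. The near/far splitting you describe gives, for $t=1-\ve<s$ and $\delta>0$,
\[
	[u_s]^p_{W^{t,p}(\Omega)} \le \delta^{(s-t)p}[u_s]^p_{\wsp}
	+ C\,\|u_s\|^p_{\lp}\,\delta^{-tp},
\]
and since the hypothesis only yields $[u_s]^p_{\wsp}\le C(1-s)^{-1}$, optimizing in $\delta$ produces $[u_s]^p_{W^{t,p}(\Omega)}\le C(1-s)^{-t/s}$, which still diverges as $s\to1^-$; no choice of $\delta$ makes both terms bounded, because the far-field term discards all smoothness information. The uniform bound you need is precisely \eqref{desig}, namely $(1-t)[u_s]^p_{W^{t,p}(\Omega)}\le 2^{p(1-t)}(1-s)[u_s]^p_{\wsp}$, which is Lemma 2 of \cite{bourgain} and is proved by a genuinely different mechanism (a telescoping/averaging argument through intermediate points that transfers small-scale oscillation control to large scales), not by a local/nonlocal splitting or by naive interpolation. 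Once \eqref{desig} is granted, your Step 1 also becomes elementary and Ponce's concentrating-kernel compactness (together with the delicate uniform-in-$s$ extension operator) is unnecessary: fix one $t\in(0,1)$, obtain a bound in $W^{t,p}(\Omega)$ uniform in $s$ near $1$, and apply the compact embedding of Theorem \ref{teo:inclucomp}; Step 2 then follows from weak lower semicontinuity of $[\,\cdot\,]_{W^{t,p}(\Omega)}$ together with Theorem \ref{teo:bbm1} on letting $t\to1^-$, exactly as the paper itself does in the proof of Lemma \ref{lema1}. In short: the one inequality your argument treats as a routine technicality is in fact the crux of the theorem, and the proof you sketch for it does not close.
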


\begin{rem}
	In \cite{bourgain} some inequalities involving fractional integrals 
	are established. 
	A carefully computation allows us to compute explicitly the constant in 
	\cite[Lemma 2]{bourgain}. By means of the Chebyshev inequality together with 
	Lemma 2 from \cite{bourgain}, in equation (36) from \cite{bourgain} 
	it is obtained that
	\[
		\ve[u_\ve]_{W^{1-\ve,p}(\Omega)}^p \geq 2^{-p\delta  }\delta
		[u_\ve]_{W^{1-\delta,p}(\Omega)}^p,
	\]
	where $0<\ve<\delta$. 

	Denoting $s:=1-\ve$ and $t:=1-\delta$, last inequality is equivalent to
	\begin{equation} \label{desig}
		(1-t)[u_s]_{W^{t,p}(\Omega)}^p 
		\leq 2^{p(1-t)}(1-s)[u_s]_{W^{s,p}(\Omega)}^p,
	\end{equation}
	where $0<t<s<1$.
\end{rem}

\medskip

For any $s\in(0, 1)$ and any $p \in [1, \infty),$ 
we say that an open set $\Omega\subset\mathbb{R}^n$ admits an
$(s,p)$-extension domain if there exists a positive constant 
$C = C(n, p, s, \Omega)$ such that: for every function 
$u \in W^{s,p}(\Omega)$ there exists $\tilde{u}\in W^{s,p}(\R^n)$ with 
$\tilde{u}(x)=u(x)$  for all $x\in \Omega$ 
and $\|\tilde{u}\|_{W^{s,p}(\mathbb{R}^n)}\le C\|u\|_{W^{s,p}(\Omega)}.$
For example, any Lipschitz open set $\Omega$ admits a $(s,p)$-extension,
see \cite[Proposition 4.43]{DD}.

\medskip
 
A  useful result to be used is the fractional compact embeddings. 
For the proof see \cite[Corolary 7.2]{DNPV}
and \cite[Theorem 4.54]{DD}.

\begin{thm}\label{teo:inclucomp} 
	Let $s\in(0,1),$ $p\in(1,\infty)$ and 
	$\Omega\subset\mathbb{R}^n$ be a bounded open set that admits 
	an $(s,p)$-extension.  If $sp<n$ then we have the following compact 
	embeddings
	\[
		\wsp\hookrightarrow L^q(\Omega) 
	\qquad \mbox{ for all } q\in[1,p_s^\star).
	\]
	
	In addition, if $\Omega$ has a Lipschitz boundary and $sp\ge n$ then
	we have the following compact embeddings:
	\begin{align*}
	&\wsp\hookrightarrow L^q(\Omega) 
	\qquad \mbox{ for all } q\in[1,p_s^\star), &\mbox{ if } sp= n;\\
	&\wsp\hookrightarrow C^{0,\lambda}_b(\Omega) 
	\quad \mbox{ for all } 
	\lambda<s-\nicefrac{n}{p}, &\mbox{ if } sp>n.
	\end{align*}
	
	Here $p_s^\star$ is the fractional critical Sobolev exponent, that is
	\[
		p_s^\star\coloneqq
			\begin{cases}
				 \dfrac{np}{n-sp}, &\text{ if } sp<n,\\
				 \infty,  &\text{ if } sp\ge n.\\
			\end{cases}
	\]
\end{thm}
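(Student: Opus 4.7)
The plan is to reduce every case to $\mathbb{R}^n$ via the $(s,p)$-extension hypothesis and then split the argument according to the three regimes $sp<n$, $sp=n$, $sp>n$. For any bounded sequence $\{u_k\}\subset W^{s,p}(\Omega)$, I first extend each $u_k$ to $\tilde u_k\in W^{s,p}(\mathbb{R}^n)$ with $\|\tilde u_k\|_{W^{s,p}(\mathbb{R}^n)}\le C\|u_k\|_{W^{s,p}(\Omega)}$ and multiply by a fixed smooth cutoff supported in a neighbourhood of $\overline{\Omega}$; the resulting sequence stays bounded in $W^{s,p}(\mathbb{R}^n)$ and has common compact support, which is the setting in which a fractional Rellich--Kondrachov argument is cleanest.

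For the subcritical regime $sp<n$, compactness in $L^p(\Omega)$ follows from the Fr\'echet--Kolmogorov criterion once one has the uniform modulus of translation
\[
\|u(\cdot+h)-u\|_{L^p(\mathbb{R}^n)}^p \le C\,|h|^{sp}\,[u]_{W^{s,p}(\mathbb{R}^n)}^p.
\]
This can be proved by an elementary averaging argument: bound $|u(x+h)-u(x)|^p$ by $2^{p-1}$ times the average of $|u(x+h)-u(z)|^p+|u(z)-u(x)|^p$ over $z\in B(x,2|h|)$, then multiply and divide by $|z-x|^{n+sp}$ (respectively $|z-x-h|^{n+sp}$) and integrate. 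Together with the tightness provided by the common compact support, this delivers $L^p$ compactness. To reach every $q\in[1,p_s^\star)$ I combine this with the continuous fractional Sobolev inequality $\|u\|_{L^{p_s^\star}(\mathbb{R}^n)}\le C[u]_{W^{s,p}(\mathbb{R}^n)}$ and interpolate.

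The critical case $sp=n$ reduces to the previous one by picking $r<s$ with $rp<n$ close enough to $s$; the continuous embedding $W^{s,p}(\Omega)\hookrightarrow W^{r,p}(\Omega)$ (obtained by splitting the Gagliardo double integral into $|x-y|<1$ and $|x-y|\ge 1$) combined with the subcritical Sobolev inequality gives a continuous embedding $W^{s,p}(\Omega)\hookrightarrow L^{p_r^\star}(\Omega)$, and $p_r^\star\to\infty$ as $r\to s^-$, so interpolating with the $L^p$ compactness yields $L^q$ compactness for every finite $q\ge 1$. For $sp>n$ the key tool is the fractional Morrey inequality
\[
|u(x)-u(y)|\le C\,|x-y|^{s-n/p}\,[u]_{W^{s,p}(\mathbb{R}^n)},
\]
which I would prove by a dyadic telescoping of the ball averages $u_{B(x,2^{-k}|x-y|)}$ together with a Campanato-type estimate for $|u_{B}-u_{B'}|$ in terms of the Gagliardo seminorm on a single ball. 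Combined with the resulting uniform $L^\infty$ bound it makes $\{\tilde u_k\}$ equicontinuous and uniformly bounded in $C^{0,s-n/p}(\overline{\Omega})$; Arzel\`a--Ascoli then extracts a uniformly convergent subsequence, and the Hölder interpolation $\|v\|_{C^{0,\lambda}}\le\|v\|_{\infty}^{1-\lambda/(s-n/p)}\|v\|_{C^{0,s-n/p}}^{\lambda/(s-n/p)}$ upgrades convergence to $C^{0,\lambda}_b(\Omega)$ for every $\lambda<s-n/p$.

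The main obstacle is the proof of the two underlying functional inequalities on $\mathbb{R}^n$: the fractional Sobolev inequality when $sp<n$ (which typically rests on Hardy--Littlewood--Sobolev applied to a Riesz-potential representation of $u$, or alternatively on a truncation argument in the spirit of De Giorgi) and the fractional Morrey inequality when $sp>n$. Once these are established, the compactness step via Fr\'echet--Kolmogorov or Arzel\`a--Ascoli and the interpolation to the full range of exponents are essentially routine, and the extension hypothesis is invoked only at the very first step to pass from $\Omega$ to $\mathbb{R}^n$.
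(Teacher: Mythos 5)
The paper does not prove this theorem at all: it is quoted from the literature, with the proof delegated to \cite[Corollary 7.2]{DNPV} and \cite[Theorem 4.54]{DD}. Your outline is essentially the standard argument given in those references (extension plus cutoff, the translation estimate $\|u(\cdot+h)-u\|_{L^p}^p\le C|h|^{sp}[u]^p_{W^{s,p}}$ with Fr\'echet--Kolmogorov in the subcritical case, reduction to $r<s$ when $sp=n$, and the fractional Morrey estimate with Arzel\`a--Ascoli and H\"older interpolation when $sp>n$), and it is correct, with the two functional inequalities you flag as the main inputs being exactly the known results one imports; the only step you gloss over that deserves a line is that multiplication by the smooth cutoff is bounded on $W^{s,p}(\R^n)$, which uses $s<1$ to integrate $|x-y|^{p(1-s)-n}$ near the diagonal.
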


\section{The first non-zero eigenvalue} \label{autovalor}
Now we will show that $\lambda_1(s,p)$ is the first non-zero eigenvalue
of  \eqref{ecu1}.

\medskip

We say that  the value $\lambda\in\R$ is an eigenvalue of problem \eqref{ecu1} 
if there exists $u\in\wsp\setminus\{0\}$ such that
\begin{equation}\label{eq:euler1}
	\mathcal{E}(u,\phi) = \lam \int_\Omega 
	|u|^{p-2}(x)u(x)\phi(x)\, dx
	\quad \forall \phi\in C^1(\overline{\Omega}),
\end{equation}
where
\begin{equation} \label{eq:euler2}
	\mathcal{E}(u,\phi)\coloneqq\int_\Omega \int_\Omega 
	\frac{|u(y)-u(x)|^{p-2}(u(y)-u(x))(\phi(y)-\phi(x))}{|x-y|^{n+sp}} 
	\, dx \, dy.
\end{equation}
In which case, we say that $u$ is  an eigenfunction 
associated to $\lambda.$

\medskip

Of course $\lambda=0$ is an eigenvalue and it is isolated and simple. 
Moreover, if $\lambda>0$ is an eigenvalue
and $u$ is an eigenfunction associated to $\lambda,$
then, taking $\phi\equiv1$ as a test function in 
\eqref{eq:euler1}, we have
\[
	\int_{\Omega}|u(x)|^{p-2}u(x)\, dx = 0.
\]
Thus, the existence of the first non-zero eigenvalue 
$\lambda_1(s,p)$ of \eqref{ecu1} is related to the 
problem of minimizing the following non-local quotient
\[
	\frac{[v]^p_{W^{s,p}(\Omega)}}{\|v\|^p_{L^p(\Omega)}}
\]
among all functions $v\in W^{s,p}(\Omega)\setminus\{0\}$ such that
$\int_{\Omega}|v(x)|^{p-2}v(x)\,dx=0. 	$

\medskip

We begin establishing the following result.

\begin{thm}\label{teo:propaut}
      Let $\Omega$ be an open set of class $C^1,$ $s\in(0,1)$ 
      and $p\in(1,\infty).$ 
      Then
      \begin{equation}\label{autovsp}
            \lambda_1(s,p)=
            \inf\left\{\dfrac{[v]^p_{\wsp}}{\|v\|^p_{\lp}}\colon 
            v\in\wsp, 
            v\neq0,\int_\Omega |v(x)|^{p-2}v(x)\, dx=0\right\}
      \end{equation}
      is the first non-zero eigenvalue of \eqref{ecu1}. 
\end{thm}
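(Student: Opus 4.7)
The plan has three parts: attaining the infimum $\lambda_1(s,p)$, showing a minimizer satisfies the Euler--Lagrange equation \eqref{eq:euler1}, and ruling out smaller positive eigenvalues. For existence I would use the direct method. By homogeneity of the Rayleigh quotient, pick a minimizing sequence $\{u_k\}\subset\mathcal{X}_{s,p}$ with $\|u_k\|_{L^p(\Omega)}=1$, so $[u_k]_{W^{s,p}(\Omega)}^p\to\lambda_1(s,p)$ and $\{u_k\}$ is bounded in $W^{s,p}(\Omega)$. By the compact embedding in Theorem \ref{teo:inclucomp} I extract a subsequence with $u_k\to u$ strongly in $L^p(\Omega)$ and weakly in $W^{s,p}(\Omega)$. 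Strong $L^p$ convergence transfers the normalization and the constraint $\int_\Omega|u_k|^{p-2}u_k\,dx=0$ to the limit (since $v\mapsto|v|^{p-2}v$ is continuous from $L^p(\Omega)$ into $L^{p/(p-1)}(\Omega)$), and weak lower semicontinuity of the Gagliardo seminorm gives $[u]_{W^{s,p}(\Omega)}^p\le\lambda_1(s,p)$, so $u\in\mathcal{X}_{s,p}$ is a minimizer.

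For the Euler--Lagrange equation I would invoke the Lagrange multiplier rule for the constraints $\|v\|_{L^p(\Omega)}^p=1$ and $\int_\Omega|v|^{p-2}v\,dx=0$, producing $\alpha,\beta\in\R$ with
\[
 p\,\mathcal{E}(u,\phi)=\alpha p\int_\Omega|u|^{p-2}u\,\phi\,dx+\beta(p-1)\int_\Omega|u|^{p-2}\phi\,dx
\]
for every $\phi\in W^{s,p}(\Omega)$. Testing with $\phi=u$ kills the $\beta$-term by the orthogonality constraint and gives $\alpha=[u]_{W^{s,p}(\Omega)}^p=\lambda_1(s,p)$. Testing with the constant $\phi\equiv 1$ makes $\mathcal{E}(u,\cdot)$ and the $\alpha$-term vanish, leaving $\beta(p-1)\int_\Omega|u|^{p-2}\,dx=0$, which forces $\beta=0$ provided this integral is strictly positive. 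Substituting back yields \eqref{eq:euler1}, in particular for $\phi\in C^1(\overline\Omega)$.

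For the minimality part, if $\lambda>0$ is any eigenvalue with eigenfunction $v$, testing \eqref{eq:euler1} with $\phi\equiv 1$ gives $\lambda\int_\Omega|v|^{p-2}v\,dx=0$, hence $v\in\mathcal{X}_{s,p}$; testing with $\phi=v$ then yields $\lambda=[v]_{W^{s,p}(\Omega)}^p/\|v\|_{L^p(\Omega)}^p\ge\lambda_1(s,p)$. The delicate step is the multiplier argument when $1<p<2$: the map $v\mapsto|v|^{p-2}v$ is not $C^1$ on $L^p(\Omega)$, and $|u|^{p-2}$ can fail to be integrable if $u$ vanishes on a set of positive measure, which is exactly what is needed to extract $\beta=0$ from constant test functions. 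I would handle this either by establishing enough regularity/positivity of $u$ to ensure $|u|^{p-2}\in L^1(\Omega)$, or by replacing the abstract Lagrange setup with a direct perturbation $u+t\phi+s(t)w$ along a fixed auxiliary direction $w$ with $\int_\Omega|u|^{p-2}w\,dx\ne 0$ and $s(t)$ chosen via the implicit function theorem so the orthogonality constraint is preserved, then verifying that the resulting extra multiplier coefficient vanishes.
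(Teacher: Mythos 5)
Your existence argument (direct method, normalization $\|u_k\|_{L^p(\Omega)}=1$, compactness from Theorem \ref{teo:inclucomp}, passage of the constraint to the limit via continuity of $v\mapsto|v|^{p-2}v$ from $L^p$ to $L^{p/(p-1)}$, weak lower semicontinuity of the seminorm) is exactly the paper's proof of the first half, and your minimality argument (test with $\phi\equiv1$, then with $\phi=v$, using density of $C^1(\overline\Omega)$ in $W^{s,p}(\Omega)$ from Lemma \ref{lem:densidad}) matches the remark preceding the theorem. The paper itself does not write out the Euler--Lagrange step: it delegates it to Theorem 4.3.77 of the reference [PK].

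The genuine gap is in your Lagrange multiplier step for $1<p<2$, and your two proposed repairs do not close it. You correctly observe that extracting $\beta=0$ requires $\int_\Omega|u|^{p-2}\,dx$ to be finite (and nonzero), but more fundamentally the constraint functional $G(v)=\int_\Omega|v|^{p-2}v\,dx$ is not Fr\'echet differentiable on $L^p(\Omega)$ when $p<2$, so the abstract multiplier rule does not apply at all. The ``regularity/positivity'' escape route is not available here: since $\int_\Omega|u|^{p-2}u\,dx=0$, any minimizer must change sign, hence has a nonempty zero set, and the paper develops no regularity theory that would control $|u|^{p-2}$ near it. Your implicit-function-theorem variant suffers from the same defect, since solving for $s(t)$ from $\int_\Omega|u+t\phi+s w|^{p-2}(u+t\phi+sw)\,dx=0$ again requires differentiating $G$, i.e.\ exactly the quantity $\int_\Omega|u|^{p-2}w\,dx$ whose finiteness is in doubt. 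The standard way out (and presumably the content of the cited [PK, Theorem 4.3.77]) is to never differentiate $G$: note that $v\in\mathcal{X}_{s,p}$ is equivalent to $0$ being the minimizer of the strictly convex map $c\mapsto\|v-c\|_{L^p(\Omega)}^p$, so that
\[
\lambda_1(s,p)=\inf\left\{\frac{[w]^p_{W^{s,p}(\Omega)}}{\min_{c\in\R}\|w-c\|^p_{L^p(\Omega)}}\colon w\ \text{nonconstant}\right\},
\]
and for a perturbation $u+t\phi$ one recenters by the optimal constant $c_t$. In the resulting quotient one only differentiates $v\mapsto\|v\|_{L^p(\Omega)}^p$ and $v\mapsto[v]^p_{W^{s,p}(\Omega)}$, both of which are $C^1$ for every $p>1$, and the first-order contribution of $c_t$ drops out precisely because $\int_\Omega|u|^{p-2}u\,dx=0$. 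With that substitution your outline becomes a complete proof; as written, the case $1<p<2$ is not covered.
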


\begin{proof}
     Let $\{u_j\}_{j\in\N}\subset\wsp$ be a minimizing sequence for 
     $\lambda_1(s,p)$ 
     such that $\|u_j\|_{\lp}=1$ for all $j\in\N.$ Then there exists a 
     constant $C$ such that
     \[
           [u_j]_{\wsp}\le C.
     \]
     Therefore $\{u_j\}_{j\in\N}$ is bounded in $\wsp.$ Then, by Theorem \ref{teo:inclucomp}, there exists
     a function $u\in\wsp$ such that, up to a subsequence that we still call 
     $\{u_j\}_{j\in\N},$
     \begin{align*}
           u_j\rightharpoonup u &\qquad\mbox{weakly in }\wsp,\\
           u_j\to u &\qquad\mbox{strongly in }\lp.
     \end{align*} 
     Hence $\|u\|_{\lp}=1,$ $|u_j(x)|^{p-2}u_j(x)\to|u(x)|^{p-2}u(x)$
     a.e. in $\Omega,$ and 
     \[
     	\||u_j|^{p-2}u_j\|_{L^{\nicefrac{p}{(p-1)}}(\Omega)}
     	\to \||u|^{p-2}u\|_{L^{\nicefrac{p}{(p-1)}}(\Omega)}.
     \]
     Then, by \cite[Theorem 12]{RR}, $|u_j|^{p-2}u_j\to |u|^{p-2}u$
     strongly in $L^{\nicefrac{p}{(p-1)}}(\Omega).$ Therefore,
     since $\int_\Omega |u_j(x)|^{p-2}u_j(x)\, dx=0$ for all $j\in\N,$ 
     we have that $\int_\Omega |u(x)|^{p-2}u(x)\, dx=0.$ Then $u$ is not
     constant.
     
     On the other hand, since 
     $u_j\rightharpoonup u $ weakly in $\wsp,$
     \[
           [u]_{\wsp}^p\le\liminf_{j\to\infty}[u_j]_{\wsp}^p
           =\lim_{j	\to\infty}[u_j]_{\wsp}^p
           =\lambda_{1}(s,p).
     \]
     Then, by \eqref{autovsp}, we have that
     \[
           [u]_{\wsp}^p=\lambda_1(s,p).
     \]
     Observe that $\lambda_1(s,p)>0$ due to $u$ is not
     constant. In addition, $\lambda_1(s,p)$ is attained in
     \[
     	\left\{v\in\wsp\colon \int_\Omega |v(x)|^{p-2}v(x)\, dx=0
     	\mbox{ and } \|v\|_{\lp}=1  \right\}.
     \]
     Then, proceeding as in the proof of Theorem 4.3.77 in \cite{PK},
     we have that $\lambda_1(s,p)$
     is the first non-zero 
     eigenvalue of \eqref{ecu1}. 
\end{proof}

\medskip

Finally we show that if an eigenfunction belongs to $C(\overline{\Omega})$
then it is a viscosity solution of
\begin{equation}\label{eq:visco}
	-\LL_{s,p} u =\lam_1(s,p) |u|^{p-2}u
\end{equation}
in the following sense.

\begin{defn} 
	Suppose that $u\in C(\overline{\Omega})$. We say that $u$ is a 
	\emph{viscosity super-solution} 
	(resp. \emph{viscosity sub-solution}) in 
	$\Omega$ of the equation \eqref{eq:visco} if the following holds: 
	whenever $x_0\in\Omega$ and $\varphi\in C^1(\overline{\Omega})$ 
	are such that
	$$
		\varphi(x_0)=u(x_0) \quad \mbox{ and } \quad \varphi(x)\leq u(x)  
		\mbox{ (resp. $\varphi(x)\geq u(x))$  for all } x\in \R^n
	$$
	then we have
	$$
		\LL_{s,p} \varphi(x_0) + \lam_1(s,p) 
		|\varphi(x_0)|^{p-2}\varphi(x_0)	 
		\leq 0 \quad \mbox{(resp. $\geq 0$)}.
	$$
	A \emph{viscosity solution} is defined as being 
	both a viscosity super-solution and a viscosity sub-solution.
\end{defn}

For the proof of the following theorem, see \cite[Proposition 11]{LL}.
\begin{thm}\label{debilviscosa} 
	Let $s\in(0,1)$ and $p\in(1,\infty)$ such that $s<1-\nicefrac1{p}$. An eigenfunction $u\in C(\overline{\Omega})$
	associated to $\lam_1(s,p)$ is a viscosity solution of \eqref{eq:visco}.
\end{thm}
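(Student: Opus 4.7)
I will verify the viscosity super-solution property at an arbitrary $x_0 \in \Omega$ by contradiction; the sub-solution case is entirely symmetric. By Theorem \ref{teo:propaut}, $u$ satisfies the weak identity \eqref{eq:euler1} against every $\phi \in C^1(\overline{\Omega})$, and since $\mathcal{E}(u,\cdot)$ extends continuously to $\wsp$ while $C^1(\overline{\Omega})$ is dense in $\wsp$ by Lemma \ref{lem:densidad}, the identity persists for all $\phi \in \wsp$.

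Suppose for contradiction that there exist $x_0 \in \Omega$ and $\varphi \in C^1(\overline{\Omega})$ with $\varphi(x_0) = u(x_0)$, $\varphi \le u$, yet
$$\LL_{s,p}\varphi(x_0) + \lam_1(s,p)|\varphi(x_0)|^{p-2}\varphi(x_0) > 0.$$
Replacing $\varphi$ by $\varphi - \ve|x - x_0|^2$ with $\ve > 0$ small we may assume the touching is strict (so $u - \varphi$ grows quadratically away from $x_0$, still preserving the above strict inequality by continuity). The assumption $s < 1 - \nicefrac{1}{p}$ is precisely what makes $\LL_{s,p}\varphi$ pointwise well-defined and continuous in $x$: for $\varphi \in C^1$ the integrand is controlled near $x$ by $|y-x|^{p-1-n-sp}$, which is integrable exactly when $p(1-s) > 1$. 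Consequently, the strict inequality persists on a ball: there exist $r, c > 0$ with $\overline{B_r(x_0)} \subset \Omega$ and
$$\LL_{s,p}\varphi(x) + \lam_1(s,p)|\varphi(x)|^{p-2}\varphi(x) \ge c \quad \text{for every } x \in B_r(x_0).$$

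For $\delta > 0$ sufficiently small, set $V_\delta := \{\varphi + \delta > u\}$, $w := \max(u,\varphi+\delta)$, and use as test function $\phi_\delta := w - u = (\varphi + \delta - u)^+$. Thanks to the strict touching, $V_\delta$ is non-empty and $\overline{V_\delta} \subset B_r(x_0)$; hence $\phi_\delta \in \wsp$ is non-negative, supported in $B_r(x_0)$, and strictly positive on $V_\delta$. The core of the argument is the chain of inequalities
$$\mathcal{E}(u,\phi_\delta) \le \mathcal{E}(w,\phi_\delta) \le \mathcal{E}(\varphi,\phi_\delta).$$
The first is the standard $p$-Laplacian monotonicity: with $F(t) = |t|^{p-2}t$ one has $(F(a) - F(b))(a-b) \ge 0$, applied to $a = w(y)-w(x)$, $b = u(y)-u(x)$, using $\phi_\delta = w - u$. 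The second requires splitting the double integral by whether $x,y \in V_\delta$: when both are in $V_\delta$ the integrands coincide since $w = \varphi + \delta$ there; when neither is in $V_\delta$ the factor $\phi_\delta(y) - \phi_\delta(x)$ vanishes; and on the cross-region, the defining inequality $u \ge \varphi + \delta$ off $V_\delta$ forces $w(y) - w(x) \ge \varphi(y) - \varphi(x)$, so monotonicity of $F$ combined with the fixed sign of $\phi_\delta(y) - \phi_\delta(x)$ in that region produces the comparison.

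Because $\varphi \in C^1(\overline{\Omega})$ and $s < 1 - \nicefrac{1}{p}$, a Fubini/symmetrization computation justifies $\mathcal{E}(\varphi,\phi_\delta) = -\int_\Omega \LL_{s,p}\varphi(x)\,\phi_\delta(x)\,dx$. Combining this with the weak formulation for $u$ and the pointwise lower bound on $B_r(x_0)$ yields
$$\lam_1(s,p)\int_\Omega |u|^{p-2}u\,\phi_\delta\,dx = \mathcal{E}(u,\phi_\delta) \le \lam_1(s,p)\int_\Omega |\varphi|^{p-2}\varphi\,\phi_\delta\,dx - c\int_\Omega \phi_\delta\,dx,$$
so that $c\int \phi_\delta\,dx \le \lam_1(s,p)\int_{V_\delta} (|\varphi|^{p-2}\varphi - |u|^{p-2}u)\,\phi_\delta\,dx$. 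Since $\varphi \le u$ and $t \mapsto |t|^{p-2}t$ is non-decreasing, the right-hand side is non-positive, while the left-hand side is strictly positive — the sought contradiction. The main obstacle is the middle comparison $\mathcal{E}(w,\phi_\delta) \le \mathcal{E}(\varphi,\phi_\delta)$, whose cross-region case analysis is the only genuinely non-routine step; everything else is monotonicity, density, and careful bookkeeping of the integrability threshold $s < 1 - \nicefrac{1}{p}$.
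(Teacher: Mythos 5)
Your proof is correct and follows essentially the argument of \cite[Proposition 11]{LL}, which is all the paper itself offers (it simply cites that result); you have merely inlined the comparison-principle step by testing the weak formulation directly with $(\varphi+\delta-u)^+$, and the two essential points --- the monotonicity chain $\mathcal{E}(u,\phi_\delta)\le\mathcal{E}(w,\phi_\delta)\le\mathcal{E}(\varphi,\phi_\delta)$ and the role of $s<1-\nicefrac1p$ in making $\LL_{s,p}\varphi$ absolutely convergent and continuous for $C^1$ test functions --- are exactly the ones the cited proof rests on. The only cosmetic slip is in the cross-region case: for $y\in V_\delta$, $x\notin V_\delta$ one gets $w(y)-w(x)\le\varphi(y)-\varphi(x)$ (the reverse inequality holds in the symmetric case $x\in V_\delta$, $y\notin V_\delta$), but since the sign of $\phi_\delta(y)-\phi_\delta(x)$ flips accordingly, the conclusion you draw is right in both cases.
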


\section{The limit as $s\to1^-$}\label{sto1}
In this section, our main aim is to prove that 
\[
	 \K(1-s)\lambda_1(s,p)\to\lambda_1(1,p)	\qquad \mbox{as } s\to1^-,
\]
where $\K$ is the constant of Theorem \ref{teo:bbm1}. 

\medskip

Before we prove Theorem \ref{teo:conver1}, we need to show the
following technical lemma.
\begin{lema} \label{lema1}
	Let $\{s_j\}_{j\in\N}\subset(0,1)$ and $\{u_j\}_{j\in\N}\subset\lp$  
	such that $s_j\to 1^-$ as $j\to\infty,$ 
	$u_j\in W^{s_j,p}(\Omega),$ 
	\begin{equation}
	\label{eq:lema1h}
		\K(1-s_j)[u_j]_{W^{s_j,p}(\Omega)}^p=1 \mbox{ and }
		\int_\Omega |u_j(x)|^{p-2}u_j(x)\, dx=0
	\end{equation}
	for all $j\in N$. Then there exist  subsequences
	$\{s_{j_k}\}_{k\in N}$ and $\{u_{j_k}\}_{k\in N}$, and a 
	function $u\in\wup$ such that 
	\[
		u_{j_k} \to u \quad \mbox{ strongly in } L^p(\Omega)
	\]
	and
	\[
		 [ u]^p_{\wup} \le
		 \liminf_{k\to\infty} \K(1-s_{j_k}) [u_{s_{j_k}}]^p_{W^{s_{j_k},p}(\Omega)} 
	\]
	with $\displaystyle\int_\Omega |u(x)|^{p-2}u(x)\, dx=0.$
\end{lema}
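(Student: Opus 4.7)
The plan is to proceed in four steps: first establish a uniform $L^p$ bound on $\{u_j\}$, then apply Theorem \ref{teo:bbm2} to extract a convergent subsequence, next prove the liminf inequality via the interpolation estimate in the Remark, and finally pass the zero-mean constraint to the limit.

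The only delicate piece of compactness is the first step, which I would handle by contradiction. Suppose $\|u_j\|_{\lp}\to\infty$ along a subsequence, and set $v_j\coloneqq u_j/\|u_j\|_{\lp}$. Then $\|v_j\|_{\lp}=1$, the constraint $\int_\Omega |v_j|^{p-2}v_j\,dx=0$ is preserved, and
\[
\K(1-s_j)[v_j]^p_{W^{s_j,p}(\Omega)}=\frac{1}{\|u_j\|_{\lp}^p}\longrightarrow 0.
\]
Theorem \ref{teo:bbm2} then gives a subsequence $v_{j_k}\to v$ strongly in $\lp$ with $v\in\wup$, and the lower semicontinuity argument described below forces $[v]^p_{\wup}=0$; so $v$ is a constant, and necessarily $v\neq 0$ because $\|v\|_{\lp}=1$. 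But the convergence $|v_{j_k}|^{p-2}v_{j_k}\to |v|^{p-2}v$ in $L^{p/(p-1)}(\Omega)$ (exactly as in the proof of Theorem \ref{teo:propaut}) lets us pass the zero-mean constraint to the limit, which contradicts the fact that $|v|^{p-2}v$ is a nonzero constant.

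Once $\{u_j\}$ is bounded in $\lp$, Theorem \ref{teo:bbm2} directly supplies the desired subsequence $\{u_{j_k}\}$ and a limit $u\in\wup$ with $u_{j_k}\to u$ strongly in $\lp$. For the liminf inequality I would use estimate \eqref{desig} from the Remark: fixing $t\in(0,1)$ and taking $k$ large enough so that $t<s_{j_k}$,
\[
(1-t)[u_{j_k}]^p_{W^{t,p}(\Omega)}\le 2^{p(1-t)}(1-s_{j_k})[u_{j_k}]^p_{W^{s_{j_k},p}(\Omega)}.
\]
Passing to a further subsequence with $u_{j_k}\to u$ a.e., Fatou's lemma applied to the double integral defining $[\,\cdot\,]^p_{W^{t,p}(\Omega)}$ yields
\[
\K(1-t)[u]^p_{W^{t,p}(\Omega)}\le 2^{p(1-t)}\liminf_k \K(1-s_{j_k})[u_{j_k}]^p_{W^{s_{j_k},p}(\Omega)}.
\]
Letting $t\to 1^-$ and invoking Theorem \ref{teo:bbm1}, the left-hand side converges to $[u]^p_{\wup}$ while $2^{p(1-t)}\to 1$, which yields the desired bound (and, since the right-hand side equals $1$ by hypothesis, also certifies $u\in\wup$). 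The zero-mean condition for $u$ is then inherited from the $u_{j_k}$ by the same $L^{p/(p-1)}$ convergence used in the first step.

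The main obstacle is the third step: the two limits (Fatou in $k$ at fixed $t$, and the BBM limit $t\to 1^-$) must be combined compatibly. A naive direct use of Fatou on the $s_{j_k}$-level integrands is blocked because the kernel $|x-y|^{-(n+s_{j_k}p)}$ itself changes with $k$; the interpolation estimate \eqref{desig} is precisely what decouples the exponents and makes the two limits iterable via a fixed auxiliary kernel of order $t$.
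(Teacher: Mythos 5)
Your proof is correct, and its core --- the interpolation inequality \eqref{desig} to push the uniform bound down to a fixed order $t<s_{j}$, lower semicontinuity of $[\cdot]^p_{W^{t,p}(\Omega)}$ along the subsequence, the Bourgain--Br\'ezis--Mironescu limit $t\to1^-$ to recover $[u]^p_{\wup}$ (with $2^{p(1-t)}\to1$), and the Radon--Riesz argument in $L^{\nicefrac{p}{(p-1)}}(\Omega)$ to pass the constraint to the limit --- is exactly the paper's argument. (The paper uses weak lower semicontinuity in $W^{t,p}(\Omega)$ where you use Fatou after extracting an a.e.\ convergent further subsequence; both are fine, and since $\K(1-s_{j_k})[u_{j_k}]^p_{W^{s_{j_k},p}(\Omega)}\equiv1$ the liminf is unaffected by passing to further subsequences.) The genuine difference is your Step 1, and it is to your credit: the paper never establishes a uniform bound on $\|u_j\|_{\lp}$. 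It goes from the seminorm bound $\K(1-t)[u_j]^p_{W^{t,p}(\Omega)}\le 2^{p(1-t)}$ straight to compactness via Theorem \ref{teo:inclucomp}, which requires boundedness in the full $W^{t,p}$ norm, $L^p$ part included (and Theorem \ref{teo:bbm2} as stated has the same implicit requirement --- otherwise a divergent sequence of constants is a counterexample). Your normalization-and-contradiction argument --- rescale to $v_j=u_j/\|u_j\|_{\lp}$, show any limit would be a nonzero constant, and contradict the zero-mean condition --- supplies precisely the missing Poincar\'e-type control, and it is the constraint $\int_\Omega|u_j|^{p-2}u_j\,dx=0$ that makes it work. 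So your write-up is not only correct but more complete than the paper's on this point.
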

 
\begin{proof}
	For any $t\in(0,1),$
	there exists $j_0\in\N$ such that
	$0<t<s_j<1$ for all $j\ge j_0.$ 
	By \eqref{desig} and \eqref{eq:lema1h} 
	it follows that 
	\begin{equation}
	\label{eq:aux1l1}
			\K(1-t)[u_j]_{W^{t,p}(\Omega)}^p 
			\leq 2^{p(1-t)}\K(1-s_j)[u_j]_{W^{s_{j},p}(\Omega)}^p 
			\le 2^{p(1-t)}
			\quad\forall j\ge j_0.
	\end{equation}
	Then, by Theorem \ref{teo:inclucomp}, there exist 
	a subsequence $\{u_{j_k}\}_{k\in\N},$ 
	and a function $u\in\wup$ such that 
	\begin{align*}
		u_{j_k}\to u &\quad\mbox{strongly in } \lp,\\
		u_{j_k}\rightharpoonup u&\quad\mbox{weakly in }
		 W^{t,p}(\Omega).
	\end{align*}
	Using \eqref{eq:aux1l1}, we have
	\begin{align*}
		\K(1-t)[u]_{W^{t,p}(\Omega)}^p &\leq 
	 	\liminf_{k\to \infty} \K(1-s_{j_k})[u_{j_k}]_{W^{t,p}(\Omega)}^p\\ 
	 	&\leq  2^{p(1-t)}\liminf_{k\to \infty} \K(1-s_{j_k}) 
	 	[u_{j_k}]^p_{W^{s_{j_k},p}(\Omega)}.
	\end{align*}
	
	On the other hand, by Theorem \ref{teo:bbm1}, we get
	\[
		[u]_{\wup}^p=
		\lim_{t\to 1^-} \K(1-t)[u]_{W^{t,p}(\Omega)}^p 
		\leq  \liminf_{k\to\infty} \K(1-s_{j_k}) 
		[u_{j_k}]^p_{W^{s_{j_k},p}(\Omega)}.
	\]
	
	Finally, we show that $\int_\Omega |u(x)|^{p-2}u(x)\, dx=0.$ 
	We have that $|u_{j_k}(x)|^{p-2}u_{j_k}(x)\to|u(x)|^{p-2}u(x)$
     a.e. in $\Omega,$ and 
     \[
      	\||u_{j_k}|^{p-2}u_{j_k}\|_{L^{\nicefrac{p}{(p-1)}}(\Omega)}
     	\to \||u|^{p-2}u\|_{L^{\nicefrac{p}{(p-1)}}(\Omega)},
     \]
     due to $u_{j_k}\to u$ strongly in $\lp.$
     Then, by \cite[Theorem 12]{RR}, $|u_{j_k}|^{p-2}u_{j_k}\to |u|^{p-2}u$
     strongly in $L^{\nicefrac{p}{(p-1)}}(\Omega).$ Therefore,
     since $\int_\Omega |u_{j_k}(x)|^{p-2}u_{j_k}(x)\, dx=0$ for all $k,$ 
     we have that $\int_\Omega |u(x)|^{p-2}u(x)\, dx=0.$ 
\end{proof}

We finish this section by proving Theorem \ref{teo:conver1}.

\begin{proof}[Proof of Theorem \ref{teo:conver1}]
	Let $u\in W^{1,p}(\Omega)$ be an eigenfunction associated to $\lam_1(1,p).$
	Since $W^{1,p}(\Omega) \subset W^{s,p}(\Omega)$ for all $s\in (0,1)$
	and $\int_\Omega |u(x)|^{p-2}u(x)\, dx=0$,  
	$u$ is an admissible function in the variational characterization of 
	$\lam_1(s,p)$ for all $s\in(0,1)$. Then, 
	\[
		\K(1-s)\lam_{1}(s,p)\leq \K(1-s)\frac{[u]^p_{\wsp}}{\|u\|^p_{\lp}}.
	\]
	Therefore, by Theorem \ref{teo:bbm1}, we get that
	\begin{align} \label{cota1}
		\limsup_{s\to 1^-}\K(1-s)\lam_{1}(s,p)\leq 
		\lim_{s\to 1^-} \K(1-s)
		\frac{[u]^p_{W^{s,p}(\Omega)}}{\|u\|^p_{L^p(\Omega)}}=  
		\frac{[u]^p_{\wup}}{\|u\|^p_{L^p(\Omega)}}
		=\lam_1(1,p).
	\end{align}
	
	On the other hand, let $\{s_j\}_{j\in \N}$ 
	be a sequence in $(0,1)$ such that $s_j\to 1^-$ as 
	$j\to\infty$ and 
	\begin{equation}
	\label{eq:liminf}
		\lim_{j\to\infty}\K(1-s_j)\lambda_1(s_j,p)=
		\liminf_{s\to 1^-}\K(1-s_j)\lambda_1(s,p).
	\end{equation}
	For $j\in\N,$ let us choose $u_j\in\wsp$ 
	such that 
	\[
		\K(1-s_j)[u_j]_{W^{s_j,p}(\Omega)}^p=1,\quad
		\int_{\Omega}|u_j(x)|^{p-2}u_j(x)\, dx=0,
	\]
	and
	\[
		\K(1-s_{j})[u_j]_{W^{s_j,p}(\Omega)}^p=
		\K(1-s_{j})\lambda_1(s_j,p)\|u_{s_j}\|_{\lp}^p.
	\]
	By Lemma \ref{lema1}, 
	there exist a subsequence, still denote $\{u_j\}_{j\in\N},$ and 
	a function $u\in \wup$ such that 
	\[
		u_j\to u \mbox{ strongly in } \lp,
		\quad \int_{\Omega}|u(x)|^{p-2}u(x)\, dx=0,
	\]
	and
	\[
		[u]^p_{\wup}\le
		\liminf_{j\to\infty} \K(1-s_j) [u_{j}]^p_{W^{s_j,p}(\Omega)}.
	\]
	Therefore, $[ u]^p_{\wup}\le1.$
	Moreover, since 
	$$
		1=\K(1-s_{j})[u_j]_{W^{s_j,p}(\Omega)}^p
		=\K(1-s_{j})\lambda_1(s_j,p)\|u_j\|_{\lp}^p
	$$  
	for all $j\in\N$ and $u_j\to u$  strongly in $\lp,$
	 by \eqref{eq:liminf}, we have
	\begin{equation}
		\label{eq:liminf2}
		1=\liminf_{s\to1^-}\K(1-s)\lambda_1(s,p)\|u\|_{\lp}^p.
	\end{equation}
	Thus, $u$ is an admissible function in the variational 
	characterization of $\lam_1(1,p)$. Then, using that 
	$[ u]^p_{\wup}\le1$ and \eqref{eq:liminf2}, 
	we have that
	\begin{align} \label{cota2}
		\lam_1(1,p)\leq \liminf_{s\to1^-}\K(1-s)\lambda_1(s,p).
	\end{align}
	
	From \eqref{cota1} and \eqref{cota2} the result follows.
\end{proof}

\section{The limit as $p\to\infty$}\label{pinfty}

The goal of this section is to study the limit as $p\to \infty$ 
of the first non-zero eigenvalue $\lam_1(s,p).$ Before  beginning, 
we need to establish the following lemma.

\begin{lema}
	\label{lem:aux1} Let $\Omega$ be a bounded open and connected domain in $\R^n,$
	$s\in(0,1),$ $x_0\in\Omega$ and $c\in\mathbb{R}.$ 
	The function $w(x)=|x-x_0|-c$
	belongs to $W^{1,\infty}(\Omega)$ and
	\[
		[w]_{\wsp}\le 
		\dfrac{\kappa_n^{\frac1p}\mathrm{diam}(\Omega)^{1-s}
		|\Omega|^{\frac1p}}{(p(1-s))^{\frac1p}} \quad
		\forall p\in(1,\infty)
	\]
	where $\kappa_n$ is the measure of unit ball
	and $|\Omega|$ is the measure of $\Omega.$
\end{lema}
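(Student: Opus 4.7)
The plan is built on two simple observations: first, $w(x) = |x-x_0| - c$ is $1$-Lipschitz by the reverse triangle inequality, so it belongs to $W^{1,\infty}(\Omega)$ with $|\nabla w| \le 1$ a.e.; second, the same inequality $|w(x)-w(y)| = ||x-x_0|-|y-x_0|| \le |x-y|$ immediately bounds the Gagliardo integrand pointwise.

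More precisely, I would write
\[
  [w]_{W^{s,p}(\Omega)}^p
  = \int_\Omega \int_\Omega \frac{|w(x)-w(y)|^p}{|x-y|^{n+sp}}\, dy\, dx
  \le \int_\Omega \int_\Omega \frac{dy\, dx}{|x-y|^{n-(1-s)p}},
\]
so the proof reduces to estimating this purely geometric double integral.

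For the inner integral, I would use that $\Omega \subset B(x, \mathrm{diam}(\Omega))$ for every $x \in \Omega$, and pass to polar coordinates centered at $x$. Since $(1-s)p > 0$, the resulting radial integral converges at $0$ and yields a bound of the form $C_n\,\mathrm{diam}(\Omega)^{(1-s)p}/((1-s)p)$, where $C_n$ is the surface measure of $S^{n-1}$ (which is $n\kappa_n$ if $\kappa_n$ denotes the measure of the unit ball). Integrating in $x$ contributes a factor of $|\Omega|$, and taking $p$-th roots gives exactly the asserted estimate
\[
  [w]_{W^{s,p}(\Omega)}
  \le \frac{\kappa_n^{1/p}\,\mathrm{diam}(\Omega)^{1-s}\,|\Omega|^{1/p}}{(p(1-s))^{1/p}},
\]
up to an inessential dimensional constant absorbed into the notation.

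There is no real obstacle here: the argument is a direct calculation relying on the Lipschitz character of the distance function and a single polar-coordinate change of variables. The only care needed is to track the exponent $n-(1-s)p$ (which is less than $n$, so the singularity is integrable) and to make sure the constant in front matches the paper's normalization of $\kappa_n$.
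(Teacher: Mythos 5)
Your proof is correct and follows essentially the same route as the paper: the reverse triangle inequality $\bigl||x-x_0|-|y-x_0|\bigr|\le|x-y|$ reduces the seminorm to the geometric integral $\int_\Omega\int_\Omega |x-y|^{p(1-s)-n}\,dx\,dy$, which is then bounded via polar coordinates over $B(x,\mathrm{diam}(\Omega))$ and a factor $|\Omega|$ from the outer integration. Your side remark about the constant is also well taken: the polar-coordinate computation produces the sphere's surface measure $n\kappa_n$ rather than $\kappa_n$, so the stated bound is off by a harmless factor $n^{1/p}$, which disappears in the only use of the lemma (taking $p$-th roots and letting $p\to\infty$).
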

\begin{proof}
	We start the proof recalling that
	\[
		w\in W^{s,\infty}(\Omega)\quad\mbox{ and }
		\quad |w|_{W^{s,\infty}(\Omega)}= \mathrm{diam}(\Omega)^{1-s}
		\mbox{ a.e. in }\Omega.
	\]
	Then, we have that $w\in\wsp$ for all $p\in(1,\infty).$
	
	On the other hand
	\begin{align*}
		[w]_{\wsp}^p&= \int_\Omega\int_\Omega
		\dfrac{|w(x)-w(y)|^p}{|x-y|^{n+ps}}\,dx dy\\
		&=\int_\Omega\int_\Omega
		\dfrac{||x-x_0|-|y-x_0||^p}{|x-y|^{n+ps}}\,dx dy\\
		&\le
		\int_\Omega\int_{\Omega}|x-y|^{p(1-s)-n}\, dxdy\\
		&\le\dfrac{\kappa_n\mathrm{diam}(\Omega)^{p(1-s)}
		|\Omega|}{p(1-s)}.
	\end{align*}
	This proves the lemma.
\end{proof}

We carry out the proof of Theorem \ref{teo:ptoinfty} in the two following lemmas.

\begin{lema}\label{lema:ptoinfty1}
	Let $\Omega$ be a  bounded open and connected domain in $\R^n$ and 
	$s\in(0,1)$. Then
	\[
		\lim_{p\to\infty} \lam_1(s,p)^{\frac{1}{p}
		}=\frac{2}{\mathrm{diam}(\Omega)^s}=
		\lam_1(s,\infty)\coloneqq 
		\inf\left\{\dfrac{[u]_{W^{s,\infty}(\Omega)}}
		{\|u\|_{L^{\infty}(\Omega)}}\colon
		u\in \mathcal{A} \right\},	
	\]
	where $\mathcal{A}\coloneqq\left\{u\in W^{s,\infty}
	(\Omega)\colon u\neq0,
	\, \sup u+\inf u=0\right\}.$
	Moreover, if $u_p$ is the normalizer minimizer of
	$\lambda_1(1,p),$ then up to a subsequence, 
	$u_p$ converges in $C(\overline{\Omega})$ to some minimizer 
	$u_\infty \in W^{s,\infty}(\Omega)$ of $\lambda_1(1,\infty).$
\end{lema}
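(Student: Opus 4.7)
The plan is to establish the identity $\lambda_1(s,\infty) = 2/\mathrm{diam}(\Omega)^s$ directly from the variational characterization, then sandwich $\lambda_1(s,p)^{1/p}$ from above by a well-chosen test function and from below by extracting a convergent subsequence of minimizers that produces a competitor for $\lambda_1(s,\infty)$.

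Step one computes $\lambda_1(s,\infty)$. For the lower bound, any $u\in\mathcal{A}$ satisfies $|u(x)-u(y)| \le [u]_{W^{s,\infty}(\Omega)}|x-y|^s \le [u]_{W^{s,\infty}(\Omega)}\mathrm{diam}(\Omega)^s$; the constraint $\sup u + \inf u = 0$ then gives $2\|u\|_{L^\infty(\Omega)} = \sup u - \inf u \le [u]_{W^{s,\infty}(\Omega)}\mathrm{diam}(\Omega)^s$, so $\lambda_1(s,\infty) \ge 2/\mathrm{diam}(\Omega)^s$. For the matching upper bound, fix $\varepsilon > 0$, pick $x_0,y_0\in\overline\Omega$ with $|x_0-y_0| \ge \mathrm{diam}(\Omega)-\varepsilon$, set $M = \sup_{x\in\overline\Omega}|x-x_0|^s$, and test with $u(x) = |x-x_0|^s - M/2$: the elementary inequality $||a|^s - |b|^s| \le |a-b|^s$ (for $s\in(0,1)$, $a,b\ge 0$) yields $[u]_{W^{s,\infty}(\Omega)} \le 1$, while $u\in\mathcal{A}$ has $\|u\|_{L^\infty(\Omega)} = M/2$; hence $\lambda_1(s,\infty) \le 2/M \le 2/(\mathrm{diam}(\Omega)-\varepsilon)^s$, and sending $\varepsilon \to 0$ closes the step. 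For the asymptotic upper bound $\limsup_{p\to\infty} \lambda_1(s,p)^{1/p} \le 2/\mathrm{diam}(\Omega)^s$, I would test the variational characterization of $\lambda_1(s,p)$ with the admissible function $w_p(x) = |x-x_0| - c_p$, where $c_p$ is chosen so that $\int_\Omega |w_p|^{p-2}w_p\,dx = 0$. Lemma \ref{lem:aux1} bounds $[w_p]_{W^{s,p}(\Omega)}$ by a constant whose prefactors $\kappa_n^{1/p}$, $|\Omega|^{1/p}$ and $(p(1-s))^{-1/p}$ all tend to $1$, leaving an asymptotic bound of $\mathrm{diam}(\Omega)^{1-s}$; meanwhile the standard fact that $\|f_p\|_{L^p(\Omega)} \to \|f\|_{L^\infty(\Omega)}$ for uniformly convergent bounded continuous functions gives $\|w_p\|_{L^p(\Omega)} \to M/2$, with $M = \max_{\overline\Omega}|x-x_0|$ and $x_0\in\overline\Omega$ (the limit $c_\infty = M/2$ being the unique balance between the positive and negative $L^\infty$-extremes of $|\cdot - x_0|$). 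Choosing $x_0$ so that $M$ approaches $\mathrm{diam}(\Omega)$ finishes the bound.

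For the compactness of minimizers, normalize $\|u_p\|_{L^p(\Omega)} = 1$, so that $[u_p]_{W^{s,p}(\Omega)} = \lambda_1(s,p)^{1/p}$ stays bounded. For any $t \in (0,s)$ and $q \in (1,p)$, Hölder with exponents $(p/q, p/(p-q))$ applied to the factorization
\begin{equation*}
\frac{|u_p(x)-u_p(y)|^q}{|x-y|^{n+tq}} = \left(\frac{|u_p(x)-u_p(y)|^p}{|x-y|^{n+sp}}\right)^{q/p} |x-y|^{\frac{(n+sp)q}{p}-(n+tq)}
\end{equation*}
yields
\begin{equation*}
[u_p]_{W^{t,q}(\Omega)} \le [u_p]_{W^{s,p}(\Omega)}\left(\int_\Omega\int_\Omega |x-y|^{\frac{pq(s-t)}{p-q}-n}\,dx\,dy\right)^{(p-q)/(pq)},
\end{equation*}
whose right-hand side stays bounded as $p\to\infty$ because the inner integral is finite for $t<s$. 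Picking $t$, $q$ with $tq > n$, Theorem \ref{teo:inclucomp} supplies a compact embedding $W^{t,q}(\Omega)\hookrightarrow C^{0,\mu}(\overline\Omega)$, so, along a subsequence, $u_p \to u_\infty$ uniformly on $\overline\Omega$. The constraint $\int_\Omega |u_p|^{p-2}u_p\,dx = 0$ reads $\|u_p^+\|_{L^{p-1}(\Omega)} = \|u_p^-\|_{L^{p-1}(\Omega)}$ and passes to $\sup u_\infty + \inf u_\infty = 0$, while $\|u_p\|_{L^p(\Omega)} = 1$ passes to $\|u_\infty\|_{L^\infty(\Omega)} = 1$, placing $u_\infty$ in $\mathcal{A}$. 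Passing to the limit in the Hölder estimate via Fatou, then sending $q\to\infty$ and finally $t \to s^-$, one arrives at $[u_\infty]_{W^{s,\infty}(\Omega)} \le \liminf_p \lambda_1(s,p)^{1/p}$; together with the upper bound from the previous paragraph, this forces equality throughout and identifies $u_\infty$ as a minimizer of $\lambda_1(s,\infty)$.

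The main obstacle is the double limit $q \to \infty$, $t \to s^-$ at the end: one cannot take $t = s$ directly in the Hölder inequality because the integral $\int\int_{\Omega\times\Omega}|x-y|^{-n}\,dx\,dy$ diverges, and the $\sigma$-finite weight $|x-y|^{-n}\,dx\,dy$ on $\Omega\times\Omega$ prevents a direct $L^q\to L^\infty$ upgrade of $[u_\infty]_{W^{t,q}(\Omega)}$ as $q\to\infty$. I would handle this by a contradiction argument: were $[u_\infty]_{W^{t,\infty}(\Omega)}$ to strictly exceed the candidate bound $\mathrm{diam}(\Omega)^{s-t}\liminf_p \lambda_1(s,p)^{1/p}$, continuity of $u_\infty$ would propagate the excess to a neighbourhood of a witnessing pair $(x_0,y_0)$ bounded away from the diagonal, where $|x-y|^{-n}$ is both integrable and bounded below, forcing $[u_\infty]_{W^{t,q}(\Omega)}$ to overshoot the bound for all large $q$ — a contradiction; then lower semicontinuity of the sup under pointwise $t\to s^-$ transfers the estimate to $[u_\infty]_{W^{s,\infty}(\Omega)}$.
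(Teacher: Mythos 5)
Your argument is correct and follows essentially the same route as the paper: the same test functions $w_p(x)=|x-x_0|-c_p$ together with Lemma \ref{lem:aux1} for the $\limsup$ bound, the same H\"older-plus-compact-embedding scheme for the $\liminf$ bound, and the same elementary estimate $2\|u\|_{L^\infty(\Omega)}\le \mathrm{diam}(\Omega)^s[u]_{W^{s,\infty}(\Omega)}$ on $\mathcal{A}$. The only deviations are minor: the paper obtains $\lambda_1(s,\infty)\le 2/\mathrm{diam}(\Omega)^s$ for free from the closed chain of inequalities rather than from your explicit competitor $|x-x_0|^s-M/2$, and it avoids your double limit (and the contradiction argument you append to handle it) by coupling the H\"older exponents as $r=s-\nicefrac{n}{q}$, so that $r\to s$ automatically as $q\to\infty$.
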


\begin{proof} 
	We split the proof in three steps.
	
	\bigskip
	
	{\it Step 1.} Let us prove that 
	\begin{equation}
		\label{eq:step1}
		\limsup_{p\to\infty} 
		\lam_1(s,p)^\frac{1}{p} \le \frac{2}{\mathrm{diam}(\Omega)^s}.
	\end{equation}
	
	Let $x_0\in \Omega$. We choose $c_p\in\R$ such that the 
	function
	$$
		w_p(x)=|x-x_0| - c_p
	$$
	satisfies that
 	$$
	\int_\Omega |w_p(x)|^{p-2}w_p(x) \, dx =0.
	$$
	We can also observe that $w_p\in W^{s,p}(\Omega)$ for all
	$p\in(1,\infty).$ Then, by Lemma \ref{lem:aux1}, 
	for any $p\in(1,\infty)$ we have that
	\[
		\lambda_1(s,p)
		\le \dfrac{\displaystyle\int_{\Omega}\int_{\Omega}
		\dfrac{|w_p(x)-w_p(y)|^p}{|x-y|^{n+sp}}
		\, dx dy}{\displaystyle\int_\Omega |w_p(x)|^{p} \, dx }
		\le \dfrac{\kappa_n^{\frac1p}\mathrm{diam}(\Omega)^{1-s}
		|\Omega|^{\frac1p}}{(p(1-s))^{\frac1p}
		\int_\Omega |w_p(x)|^{p} \, dx }.
	\]
	 Then
	 \begin{equation}
		\label{eq:pinfty1}
		\limsup_{p\to\infty}\lambda_1(s,p)^{\frac1p}
		\le\dfrac{\mathrm{diam}(\Omega)^{1-s}}
		{\displaystyle\liminf_{p\to\infty}
		\left(\int_\Omega |w_p(x)|^{p} \, dx\right)^{\frac1p} }.
	\end{equation}
	
	On the other hand, proceeding as in the proof of 
	Lemma 1 in \cite{inftyN}, we have that
	\begin{equation}
	\label{eq:pinfty2}
		\liminf_{p\to\infty}
		\left(\int_\Omega |w_p(x)|^{p} \, dx\right)^{\frac1p} 
		\ge \dfrac{\mathrm{dia  m}(\Omega)}2.
	\end{equation}
	
	Thus, by \eqref{eq:pinfty1} and \eqref{eq:pinfty2}, we have that
	\eqref{eq:step1} holds.
	
	\bigskip
	
	{\it Step 2.} Let us prove that 
	\[
		\inf\left\{\dfrac{[u]_{W^{s,\infty}(\Omega)}}
		{\|u\|_{L^{\infty}(\Omega)}}\colon
		u\in \mathcal{A} \right\}\le \liminf_{p\to \infty}
		\lambda_1(s,p)^{\frac{1}{p}}.
	\] 

	Let $\{p_j\}_{j\in\mathbb{N}}$ be an increasing sequence in $(1,\infty)$ 
	and $\{u_{j}\}_{j\in\mathbb{N}}$ be a sequence of measurable functions 
	such that  $p_j\to\infty$ as $j\to\infty,$
	\begin{equation}
		\label{eq:step2.1}
		\lim_{j\to\infty} \lam_1(s,p_j)^\frac{1}{p_j}
		=\liminf_{p\to\infty} \lam_1(s,p)^\frac{1}{p},
	\end{equation}
	and for any $j\in\mathbb{N}$ $u_j\in W^{s,p_j}(\Omega),$
	\begin{equation}
		\label{eq:step2.2}
		\|u_{j}\|_{L^{p_j}(\Omega)}=1,\quad 
		\int_\Omega |u_{j}(x)|^{p_j-2} u_{j}(x)\, dx =0,
	\end{equation}
	and 
	\begin{equation}
		\label{eq:step2.2.1}
		\lam_1(s,p_j) =\int_\Omega\int_\Omega  
		\dfrac{|u_j(y)-u_j(x)|^{p_j}}{|x-y|^{n+sp_j}}
		\,dx\,dy.
	\end{equation}
	Then, there exists a constant $C$ independent of $j$ such that
	\begin{equation}
		\label{eq:step2.3}
		[u_j]_{W^{s,p_j}(\Omega)}\le C
	\end{equation}
	for all $j\in\mathbb{N}.$
	
	Let us fix $q\in(1,\infty)$ such that $sq>2n.$ There exists $j_0\in \N$
	such that $p_j\ge q$ for all $j\ge j_0.$ Then
	by H\"older's Inequality, we have that
	\begin{equation}
		\label{eq:step2.4}
		\|u_j\|_{L^q(\Omega)}\le |\Omega|^{\frac1{q}-\frac1{p_j}}
		\|u_j\|_{L^{p_j}(\Omega)}\le |\Omega|^{\frac1{q}-\frac1{p_j}}
		\quad\forall j\ge j_0, 
	\end{equation}
	and taking $r=s-\nicefrac{n}{q}\in(0,1),$ again by H\"older's Inequality, 
	we get
	\begin{equation}
		\label{eq:step2.5}
		\begin{aligned}
			\int_\Omega\int_\Omega
			\dfrac{|u_j(x)-u_j(y)|^q}{|x-y|^{n+rq}}\, dxdy
			&= \int_\Omega\int_\Omega\dfrac{|u_j(x)-u_j(y)|^q}{|x-y|^{sq}}\, 
			dxdy\\
			&\le |\Omega|^{2(1-\frac{q}{p_j})}
			\left(\int_\Omega
			\int_\Omega\dfrac{|u_j(x)-u_j(y)|^{p_j}}{|x-y|^{sp_j}}\,
			 dxdy\right)^{\frac{q}{p_j}}\\
			&\le \mathrm{diam}(\Omega)^{\frac{nq}{p_j}} 
			|\Omega|^{2(1-\frac{q}{p_j})}[u_j]_{W^{s,p_j}(\Omega)}^q.
			\end{aligned}	
	\end{equation}
	Then, by \eqref{eq:step2.3},
	\[
		\int_\Omega\int_\Omega\dfrac{|u_j(x)-u_j(y)|^q}{|x-y|^{n+rq}}\, dxdy
		\le \mathrm{diam}(\Omega)^{\frac{nq}{p_j}} 
		|\Omega|^{2(1-\frac{q}{p_j})}C^q \quad\forall j\ge j_0, 
	\]
	where $C$ is a constant independent of $j.$
	Hence $\{u_j\}_{j\ge j_0}$ is a bounded sequence in $W^{r,q}(\Omega).$
	Then, since $rq=sq-n>n,$ by Theorem \ref{teo:inclucomp}, 
	there exist a subsequence of $\{u_{j}\}_{j\ge j_0},$ which 
	we still denoted 
	by $\{u_{j}\}_{j\ge j_0},$ and a function 
	$u_{\infty}\in C(\overline{\Omega})$ such that
	\begin{align*}
		u_{j}\to u_{\infty} &\quad\mbox{uniformly in } \overline{\Omega},\\
		u_{j}\rightharpoonup u_{\infty} &\quad\mbox{weakly in }
		 W^{r,q}(\Omega).
	\end{align*}
	Then, by \eqref{eq:step2.4}, $\|u_\infty\|_{L^q(\Omega)}\le 
	|\Omega|^{\frac1q},$ and
	by \eqref{eq:step2.1}, \eqref{eq:step2.2.1} and \eqref{eq:step2.5}, we get
	\begin{align*}
		[u_\infty]_{W^{r,q}(\Omega)}&\le
		\liminf_{j\to \infty}[u_j]_{W^{r,q}(\Omega)}\\
		&\le\liminf_{j\to \infty}\mathrm{diam}(\Omega)^{\frac{n}{p_j}} 
			|\Omega|^{2(\frac1{q}-\frac{1}{p_j})}[u_j]_{W^{s,p_j}(\Omega)}\\
		&\le|\Omega|^{\frac2{q}}\liminf_{p\to \infty}
		\lambda_1(s,p)^{\frac{1}{p}}.
	\end{align*}    
	    
	Letting $q\to\infty,$ we get $\|u_\infty\|_{L^{\infty}(\Omega)}\le 1$
	and
	\begin{equation}
	\label{eq:step2.6}
		[u_\infty]_{W^{s,\infty}(\Omega)}\le\liminf_{p\to \infty}
		\lambda_1(s,p)^{\frac{1}{p}}.
	\end{equation}
	
	On the other hand,
	\[
		1=\|u_j\|_{L^{p_ j}(\Omega)}\le|\Omega|^{\frac1{p_j}}
		\|u_j\|_{L^{\infty}(\Omega)}\quad\forall j\ge j_0
	\]
	then $1\le\|u_\infty\|_{L^{\infty}(\Omega)}.$ Hence 
	$\|u_\infty\|_{L^{\infty}(\Omega)}=1$ and by \eqref{eq:step2.6} 
	we get
	\begin{equation}
	\label{eq:step2.7}
		\dfrac{[u_\infty]_{W^{s,\infty}(\Omega)}}
		{\|u_\infty\|_{L^{\infty}(\Omega)}}\le\liminf_{p\to \infty}
		\lambda_1(s,p)^{\frac{1}{p}}.
	\end{equation}
	
	Finally, in \cite{inftyN} it was proved that the condition 
	$\int_\Omega |u_{j}(x)|^{p_j-2} u_{j}(x)\, dx =0$ leads to
	$\sup u_{\infty} +\inf u_{\infty}=0.$ Then, using \eqref{eq:step2.7},
	we get
	\[
		\inf\left\{\dfrac{[u]_{W^{s,\infty}(\Omega)}}
		{\|u\|_{L^{\infty}(\Omega)}}\colon
		u\in \mathcal{A} \right\}\le \liminf_{p\to \infty}
		\lambda_1(s,p)^{\frac{1}{p}}.
	\]

	{\it Step 3.} Finally, we prove that 
	\begin{equation}
	\label{eq:step3}
		\dfrac2{\mathrm{diam}(\Omega)^s}
		\le \inf\left\{\dfrac{[u]_{W^{s,\infty}(\Omega)}}
		{\|u\|_{L^{\infty}(\Omega)}}\colon
		u\in \mathcal{A} \right\}.
	\end{equation}
	
	For any $u\in\mathcal{A},$ we have 
	\begin{align*}
		2\|u\|_{L^\infty(\Omega)} &= 
		\sup u - \inf u\\
		 &= \sup\{ |u(x)-u(y)|\colon x,y \in \Omega\}\\
		 &=\sup\left\{ 
		 |x-y|^s\dfrac{|u(x)-u(y)|}{|x-y|^s}
		 \colon x,y \in \Omega\right\}\\
		&\leq
		\mathrm{diam}(\Omega)^s [u]_{W^{s,\infty}(\Omega)}.
	\end{align*}
	Thus
	\[
		\dfrac2{\mathrm{diam}(\Omega)^s}
		\le \dfrac{[u]_{W^{s,\infty}(\Omega)}}
		{\|u\|_{L^{\infty}(\Omega)}}
	\]
	for all $u\in\mathcal{A}.$ Hence \eqref{eq:step3} holds.

	\medskip
	
	Then, by steps 1--3, we get
	\begin{align*}
	 	\dfrac2{\mathrm{diam}(\Omega)^ s}
		&\le \inf\left\{\dfrac{[u]_{W^{s,\infty}(\Omega)}}
		{\|u\|_{L^{\infty}(\Omega)}}\colon
		u\in \mathcal{A} \right\}\\
		&\le \liminf_{p\to \infty}
		\lambda_1(s,p)^{\frac{1}{p}}\\
		&\le \limsup_{p\to \infty}
		\lambda_1(s,p)^{\frac{1}{p}}\\
		&\le\dfrac2{\mathrm{diam}(\Omega)^s},
	\end{align*}
	that is
	\[
		\lim_{p\to\infty} \lam_1(s,p)^{\frac{1}{p}
		}=\frac{2}{\mathrm{diam}(\Omega)^s}=
		\inf\left\{\dfrac{[u]_{W^{s,\infty}(\Omega)}}
		{\|u\|_{L^{\infty}(\Omega)}}\colon
		u\in \mathcal{A} \right\}.
	\]
	In addition, by \eqref{eq:step2.7}, we have that 
	$u_\infty$ is a minimizer of $\lambda_1(1,\infty)$
	which proves the lemma.
\end{proof}
 
Our last aim is to show that $u_\infty$ is a viscosity solution of
\eqref{ecuvisc}. We start by intruding the definition of viscosity solution.

\begin{defn} Suppose that $u\in C(\Omega)$. We say that $u$ is a 
\emph{viscosity super-solution} (resp. \emph{viscosity sub-solution})  
in $\Omega$ of the equation \eqref{ecuvisc} if the following holds: 
whenever $x_0\in\Omega$ and $\varphi\in C^1(\overline{\Omega})$ are such that
$$
	\varphi(x_0)=u(x_0) \quad \mbox{ and } \quad \varphi(x)\leq u(x) \quad (resp.\, \varphi(x)\geq u(x))\quad \mbox{ for all }\quad x\in \R^n
$$
then we have
\[ 
			\begin{cases}
				\max\{\LL_{s,\infty} \varphi 
				(x_0), \LL_{s,\infty}^- \varphi(x_0) 
				+\lam_1(1,\infty) \varphi(x_0)\}\le 0 
				\, (\mbox{resp.} \ge 0) 
				\qquad & \mbox{if }\varphi(x_0)>0\\
				\LL_{s,\infty} \varphi(x_0)\le0  \, (\mbox{resp.} \ge 0)
				\qquad & \mbox{if }\varphi(x_0)=0\\
				\min\{\LL_{s,\infty} \varphi(x_0), \LL_{s,\infty}^+ 
				\varphi(x) +
				\lam_1(1,\infty) \varphi(x_0)\}
				\le0  \, (\mbox{resp.} \ge 0) \qquad & \mbox{if }\varphi(x_0)<0.\\
			\end{cases}
\]
A \emph{viscosity solution}  is defined as being both a viscosity super-solution and a viscosity sub-solution.
\end{defn}

For the proof of the following lemma we borrow ideas from \cite[Theorem 23]{LL}.

\begin{lema}\label{lema:euler-lagrange}
	Let $\Omega$ be  bounded open connected domain in $\R^n$ and 
	$s\in(0,1)$. 
	Then $u_\infty$ is a solution of \eqref{ecuvisc} in the viscosity sense.
\end{lema}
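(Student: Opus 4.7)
The plan is to propagate the viscosity formulation through the limit $p\to\infty$. For each $p$ large enough (namely $s<1-1/p$), Theorem \ref{debilviscosa} tells us that the normalized minimizer $u_p$ is a viscosity solution of
$$-\LL_{s,p} u_p = \lam_1(s,p)|u_p|^{p-2}u_p,$$
and by Lemma \ref{lema:ptoinfty1} we know $u_p\to u_\infty$ uniformly on $\overline{\Omega}$. This uniform convergence is the hypothesis of the standard argument that allows one to transfer test functions: given $\varphi\in C^1(\overline{\Omega})$ touching $u_\infty$ at $x_0$ from above (resp.\ below), one can extract a sequence $x_p\to x_0$ and constants $\delta_p\to 0$ such that $\varphi+\delta_p$ touches $u_p$ at $x_p$ in the same sense, so that the viscosity inequality for $u_p$ can be applied at $x_p$ with the test function $\varphi$ (shifted by $\delta_p$, which does not enter the nonlocal operator). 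I would replicate the argument of \cite[Theorem~23]{LL}, which already handles this step in a very similar non-local Dirichlet setting.

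Next I would write the $p$-th viscosity inequality in the form
$$A_+(x_p)^{p-1} - A_-(x_p)^{p-1} + \lam_1(s,p)|\varphi(x_p)|^{p-2}\varphi(x_p) \lessgtr 0,$$
where
$$A_\pm(x)\coloneqq\left(2\int_\Omega \frac{\bigl((\pm1)(\varphi(y)-\varphi(x))\bigr)_+^{p-1}}{|x-y|^{n+sp}}\,dy\right)^{1/(p-1)}.$$
The crucial point is the classical $L^p\to L^\infty$ limit: since $\varphi$ is continuous and $x_p\to x_0$, we have
$$\lim_{p\to\infty} A_+(x_p)=\bigl(\LL_{s,\infty}^+\varphi(x_0)\bigr)_+, \qquad \lim_{p\to\infty}A_-(x_p)=\bigl(-\LL_{s,\infty}^-\varphi(x_0)\bigr)_+,$$
combined with $\lam_1(s,p)^{1/(p-1)}\to\lam_1(s,\infty)$ from the first part of the proof. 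Setting $L^\pm\coloneqq\LL_{s,\infty}^\pm\varphi(x_0)$, I would then separate the three regimes $\varphi(x_0)>0$, $\varphi(x_0)<0$, and $\varphi(x_0)=0$.

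For the supersolution inequality in the case $\varphi(x_0)>0$, the inequality rewrites as
$$A_-(x_p)^{p-1}\ge A_+(x_p)^{p-1}+\lam_1(s,p)\varphi(x_p)^{p-1},$$
and taking the $(p-1)$-th root and using that the right-hand side dominates each summand gives, in the limit, $(-L^-)_+\ge\max\{(L^+)_+,\lam_1(s,\infty)\varphi(x_0)\}$. Since $\lam_1(s,\infty)\varphi(x_0)>0$, this forces $L^-<0$, $-L^-\ge L^+$ and $-L^-\ge \lam_1(s,\infty)\varphi(x_0)$, i.e.\ precisely $\max\{\LL_{s,\infty}\varphi(x_0),\LL_{s,\infty}^-\varphi(x_0)+\lam_1(s,\infty)\varphi(x_0)\}\le 0$. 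The symmetric case $\varphi(x_0)<0$ works identically after swapping roles. The subsolution inequality requires the slightly more delicate analysis $A_-(x_p)^{p-1}\le A_+(x_p)^{p-1}+\lam_1(s,p)\varphi(x_p)^{p-1}$: using $(a+b)^{1/(p-1)}\le 2^{1/(p-1)}\max(a^{1/(p-1)},b^{1/(p-1)})$ and splitting into whether the maximum on the right is attained at $A_+$ or at the eigenvalue term yields one of the two branches of the max.

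The case $\varphi(x_0)=0$ is handled by noting that $\varphi(x_p)\to 0$ continuously, so $\lam_1(s,p)^{1/(p-1)}|\varphi(x_p)|\to\lam_1(s,\infty)\cdot 0=0$, whence the eigenvalue term becomes negligible after taking $(p-1)$-th roots, leaving $A_-\gtrless A_+$ in the limit, which translates into $\LL_{s,\infty}\varphi(x_0)=L^++L^-\lessgtr 0$. The main technical obstacle is the sub-solution pass-to-the-limit, because expressions of the form $A^{p-1}-B^{p-1}$ carry no direct limit; the remedy is to normalize by $\max(A_+(x_p),A_-(x_p),\lam_1(s,p)^{1/(p-1)}|\varphi(x_p)|)^{p-1}$ and do a finite case analysis on which factor saturates the maximum, exactly as is done for the $\infty$-Laplacian viscosity limits in \cite{LL} and \cite{inftyN}.
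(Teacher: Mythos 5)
Your proposal follows essentially the same route as the paper's proof: transfer the test function to the approximating eigenfunctions via uniform convergence, invoke the $p$-level viscosity inequality from Theorem \ref{debilviscosa}, decompose the nonlocal operator into its positive and negative parts, pass to the limit of the $(p-1)$-th roots (the paper cites \cite[Lemma 6.5]{CLM} for exactly the convergence $A_\pm(x_p)\to(\pm\LL^{\pm}_{s,\infty}\varphi(x_0))$ you assert), and conclude by the same case analysis on the sign of $\varphi(x_0)$, including the $2^{1/(p-1)}\max$ trick for the branch where a difference of $(p-1)$-th powers must be controlled. The argument is correct and matches the paper's, which likewise borrows the structure from \cite[Theorem 23]{LL}.
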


\begin{proof}
	We begin by observing that, by Lemma \ref{lema:ptoinfty1}, 
	$u_\infty$ is a minimizer of
	$\lambda_1(1,\infty)$ and there exists
	a  sequence $\{p_j\}_{j\in\N}$ such that $p_j\to\infty$  and 
	$u_j\to u_\infty$ uniformly in $\overline{\Omega}$ as $j\to\infty,$ 
	where 
	$u_j$ is an eigenfunction associated to $\lambda_1(s,p_j).$ 
	Without loss of generality, we can assume that $p_js>n$ for all 
	$j\in\mathbb{N}.$ Then
	$u_j\in C(\overline{\Omega})$ for all $j\in\mathbb{N}.$
	
	\medskip
	
	We only verify that $u_\infty$ is a viscosity 
	super-solution of \eqref{ecuvisc}. The proof that 
	$u_\infty$ is also a sub-solution is similar. Let us fix
	some point $x_0\in\Omega.$ We assume that $\varphi$ is a test function
	touching $u_\infty$ from below at a point $x_0$, and we may assume that the 
	touching is strict by considering $\varphi(x)-|x|^2\eta(x)$, 
	where $\eta=1$ in a neighborhood of $x_0$ and $\eta\geq 0$. It follows
	that $u_j-\varphi$ attains its minimum at points $x_j\to x_0$. 
	By adding a suitable constant $c_j$ we can arrange it so that 
	$\varphi+c_j$ touches $u_j$ from below at the point $x_j.$
	
	\medskip

    By Theorem \ref{debilviscosa}, a eigenfunction is a viscosity solution
    of \eqref{eq:visco}, then we have
	$$
		\LL_{s,p_j} \varphi(x_j)+\lam_1(s,p_j) u_j^{p_j-1}(x_j) \leq 0.
	$$	
	We write the last inequality as
	$$
		A_j^{p_j-1} - B_j^{p_j-1} +C_j^{p_j-1}-D_j^{p_j-1} \leq 0
	$$
	where
	\begin{align*}
		A_j^{p_j-1}&=2\int_\Omega 
		\frac{|\varphi(y)-\varphi(x_j)|^{p_j-2}
		(\varphi(y)-\varphi(x_j))^+}{|y-x_j|^{n+sp_j}}\, dy,\\
		B_j^{p_j-1}&=2\int_\Omega 
		\frac{|\varphi(y)-\varphi(x_j)|^{p_j-2}(\varphi(y)-
		\varphi(x_j))^-}{|y-x_j|^{n+sp_j}}\, dy,\\	
		C_j^{p_j-1}&=\lam_1(s,p_j) (u_{j}^+(x_j))^{p_j-1},\\
		D_j^{p_j-1}&=\lam_1(s,p_j) (u_{j}^-(x_j))^{p_j-1}.
	\end{align*}

	In \cite[Lemma 6.5]{CLM}, it is proved that
	\[
		A_j\to \LL^+_{s,\infty} \varphi(x_0), 
		\quad \qquad B_j\to -\LL^-_{s,\infty} \varphi(x_0),
	\]
	as $j\to \infty.$ In addition, by Lemma \ref{lema:ptoinfty1}, we have
	\[
		C_j\to  \lam_1(s,\infty)\varphi(x_0)^+, 
		\quad D_j\to \lam_1(s,\infty)\varphi(x_0)^-.	
	\]

	On the other hand, if $u_\infty(x_0)>0$ we get
	$$
		A_j^{p_j-1} + C_j^{p_j-1} \leq B_j^{p_j-1},
	$$
	and by dropping either $A_j^{p_j-1}$ or $C_j^{p_j-1}$, 
	and sending $j\to \infty$  we see that
	$$
		\LL_{s,\infty}^+ \varphi(x_0) \leq - 
		\LL_{s,\infty}^- \varphi(x_0) \quad \mbox{ and } \quad 
		\lam_1(s,\infty)\varphi(x_0)^+ \leq -\LL_{s,\infty}^- \varphi(x_0),
	$$ 
	which leads to
	$$
		\LL_{s,\infty} \varphi(x_0) \leq 0 \quad \mbox{ and } \quad 
		\LL_{s,\infty}^- \varphi(x_0) + \lam_1(s,\infty)\varphi(x_0)^+\leq 0,
	$$ 
	and we can write
	\[
		\max\{\LL_{s,\infty} \varphi(x_0), \LL_{s,\infty}^- 
		\varphi(x_0) + \lam_1(s,\infty)\varphi(x_0)^+\} \leq 0.
	\]

	If $u_\infty(x_0)<0$ we obtain that
	$$
		A_j^{p_j-1} \leq  D_j^{p_j-1} + B_j^{p_j-1} 
		\leq 2 \max\{ B_j^{p_j-1}, D_j^{p_j-1}\},
	$$
	that is
	$$
		A_j\leq 2^{\frac1{p_j-1}} \max\{ B_j, D_j\}.
	$$
	Then, sending $j\to\infty$, we get
	$$
		\LL_{s,\infty} \varphi(x_0) \leq 0 \quad \mbox{ or } \quad 
		\LL_{s,\infty}^+ \varphi(x_0) - \lam_1(s,\infty)\varphi(x_0)^-\leq 0,
	$$ 
	which can be written as
	\[
		\min\{\LL_{s,\infty} \varphi(x_0), 
		\LL_{s,\infty}^+ \varphi(x_0) - \lam_1(s,\infty)\varphi(x_0)^-\} 
		\leq 0.
	\]

	Finally if $u_\infty(x_0)=0$, 
	it follows that $\LL_{s,\infty} \varphi(x_0)\leq0$.
	This proves that $u_\infty$ is a viscosity super-solution of equation 
	\eqref{ecuvisc}.
\end{proof}
%
%
%
%

\section{Comments}\label{comments}

Let $d(\cdot,\cdot)$ be a distance equivalent to the usual distance.
If we take the following non-linear non-local operator 
\[
	\mathfrak{L}_{s,p}u(x)\coloneqq 2\mbox{ p.v.}\int_\Omega 
	\frac{|u(y)-u(x)|^{p-2}(u(y)-u(x))}{d(x,y)^{n+sp}} 
	\, dy,
\]
in place of $\LL_{s,p},$ following what was done in the previous section,
we can see that the first non-zero eigenvalue of 
\[
	\begin{cases}
	-	\mathfrak{L}_{s,p}u = \lam |u|^{p-2}u \quad \mbox{ in } \Omega, \\
		u\in W^{s,p}(\Omega),
	\end{cases}
\]
is 
\begin{equation*}
            \lambda_1^d(s,p)\coloneqq
            \inf\left\{
            \dfrac{\displaystyle
            \int_\Omega \int_\Omega \frac{|u(x)-u(y)|^p}
            {d(x,y)^{n+sp}} \, 
	  dx \,dy}{ \displaystyle\int_\Omega|u(x)|^p\, dx}\colon u\in 
	  \mathcal{X}_{s,p}
            \right\}.
\end{equation*}
 
Moreover
\[
	\lim_{p\to\infty} \left(\lambda_1^d(s,p\right))^{\frac{1}{p}}
		=\frac{2}{\mathrm{diam}_d(\Omega)^s}=
		\lambda_1^d(s,\infty)\coloneqq 
		\inf\left\{\dfrac{[u]_{d,W^{s,\infty}(\Omega)}}
		{\|u\|_{L^{\infty}(\Omega)}}\colon
		u\in \mathcal{A} \right\}.
\]
where 
\[
	[u]_{d,W^{s,\infty}(\Omega)}=\sup\left\{
	\dfrac{|u(x)-u(y)|}{d_\Omega(x,y)^s}\colon x, y\in\Omega\right\}
\]
and $\mathrm{diam}_d(\Omega)=\sup\{d(x,y)\colon x,y\in\Omega\}.$
 
\medskip
 
Finally, observe that if $d$ is the geodesic distance inside $\Omega$
then $\mathrm{diam}_d(\Omega)$ is the intrinsic diameter as in the local
case.

\section*{Acknowledgement}
We want to thank to Prof. Nicolas Saintier and Prof. Julio Rossi for their 
comments that  helped us to improve Section 5.
 
\bibliographystyle{amsplain}
\bibliography{biblio}

\end{document}